\documentclass[12pt,a4paper]{article}
\usepackage{amssymb,amsmath,amsthm}
\usepackage{latexsym}

\newtheorem{theorem}{Theorem}
\newtheorem{lemma}{Lemma}
\newtheorem{proposition}{Proposition}
\newtheorem{corollary}{Corollary}
\newtheorem{remark}{Remark}
\numberwithin{equation}{section}
\numberwithin{theorem}{section}
\numberwithin{lemma}{section}
\numberwithin{proposition}{section}
\numberwithin{corollary}{section}
\numberwithin{remark}{section}

\begin{document}
\title{Large time behavior of a generalized Oseen evolution operator,
with applications to the Navier-Stokes flow past a rotating obstacle}
\author{Toshiaki Hishida \\
Graduate School of Mathematics, Nagoya University \\
Nagoya 464-8602, Japan \\
\texttt{hishida@math.nagoya-u.ac.jp}}
\date{}
\maketitle
\begin{abstract}
Consider the motion of a viscous incompressible fluid in a 3D
exterior domain $D$ when a rigid body $\mathbb R^3\setminus D$
moves with prescribed time-dependent translational and angular velocities.
For the linearized non-autonomous system, $L^q$-$L^r$ smoothing action
near $t=s$ as well as generation of the evolution operator
$\{T(t,s)\}_{t\geq s\geq 0}$ was shown
by Hansel and Rhandi \cite{HR14} under reasonable conditions.
In this paper we develop the $L^q$-$L^r$ decay estimates
of the evolution operator $T(t,s)$ as $(t-s)\to\infty$ and 
then apply them to the Navier-Stokes initial value problem.

\noindent
{\em Keywords}:
Navier-Stokes flow, non-autonomous Oseen flow, rotating obstacle,
exterior domain, evolution operator, $L^q$-$L^r$ decay estimate.

\noindent
{\em MSC (2010)}:
primary 35Q30, secondary 76D05.
\end{abstract}

\section{Introduction}
\label{intro}

Let us consider the $3$-dimensional Navier-Stokes flow past an obstacle,
which is a moving rigid body with prescribed translational and angular velocities.
In the reference frame attached to the obstacle,
the system is reduced to
\begin{equation}
\partial_tu+u\cdot\nabla u=\Delta u+(\eta+\omega\times x)\cdot\nabla u
-\omega\times u-\nabla p, \qquad
\mbox{div $u$}=0,
\label{NS}
\end{equation}
in a fixed exterior domain $D\subset \mathbb R^3$
(see Galdi \cite{Ga02} for details),
where $u=(u_1(x,t),u_2(x,t),u_3(x,t))^\top$ and $p=p(x,t)$ are unknown
velocity and pressure of a viscous incompressible fluid,
while $\eta=(\eta_1(t),\eta_2(t),\eta_3(t))^\top$ and
$\omega=(\omega_1(t),\omega_2(t),\omega_3(t))^\top$ stand for the
translational and angular velocities, respectively, of the obstacle.
Here and hereafter, $(\cdot)^\top$ denotes the transpose and all vectors
are column ones.
As usual, no-slip condition
\begin{equation}
u|_{\partial D}=\eta+\omega\times x
\label{noslip}
\end{equation}
is imposed at the boundary $\partial D$ of the obstacle, where
the boundary $\partial D$ is assumed to be of class $C^{1,1}$,
and the fluid is at rest at infinity:
\begin{equation}
u\to 0\quad\mbox{as $|x|\to\infty$}.
\label{sp-infty}
\end{equation}

The issue we are going to address in this paper is how we analyze the
case in which both $\eta(t)$ and $\omega(t)$
are actually time-dependent.
Suppose that they are locally H\"older continuous on $[0,\infty)$.
Then the problem \eqref{NS}--\eqref{sp-infty} subject to the initial condition
\begin{equation}
u(\cdot,0)=u_0
\label{IC}
\end{equation}
admits a unique solution locally in time provided the initial velocity $u_0$
is taken from $L^q(D)$ with $q\in [3,\infty)$
and fulfills the compatibility condition on the normal
trace at the boundary, that is,
$\nu\cdot (u_0-\eta(0)-\omega(0)\times x)|_{\partial D}=0$,
as well as $\mbox{div $u_0$}=0$,
where $\nu$ denotes the outer unit normal to $\partial D$.
This local existence theorem was proved by Hansel and Rhandi \cite{HR14}.
Global existence of a unique solution in the non-autonomous setting
has still remained open even if the data are small enough,
while weak solutions (in the sense of Leray-Hopf) were constructed globally
in time by Borchers \cite{Bor}.
The essential contribution of \cite{HR14} is
not only to construct the evolution operator
$\{T(t,s)\}_{t\geq s\geq 0}$ on $L^q_\sigma(D)$,
the space of solenoidal $L^q$-vector fields ($1<q<\infty$) 
with vanishing normal trace at $\partial D$,
which provides a solution operator
to the initial value problem for the linearized system
\begin{equation}
\begin{split}
&\partial_tu=\Delta u+(\eta+\omega\times x)\cdot\nabla u
-\omega\times u-\nabla p, \\
&\mbox{div $u$}=0, \\
&u|_{\partial D}=0, \\
&u\to 0\quad\mbox{as $|x|\to\infty$}, \\
&u(\cdot,s)=f,
\end{split}
\label{linearized}
\end{equation}
in $D\times [s,\infty)$, where $s\geq 0$ is the given initial time,
but also to show the $L^q$-$L^r$
smoothing action ($1<q\leq r<\infty$) near the initial time, namely,
\begin{equation}
\|T(t,s)f\|_r\leq C(t-s)^{-(3/q-3/r)/2}\|f\|_q,
\label{LqLr}
\end{equation}
\begin{equation}
\|\nabla T(t,s)f\|_r\leq C(t-s)^{-(3/q-3/r)/2-1/2}\|f\|_q,
\label{LqLr-grad}
\end{equation}
for $0\leq s<t\leq {\cal T}$ and $f\in L^q_\sigma(D)$ 
with some constant $C=C({\cal T})>0$,
where ${\cal T}\in (0,\infty)$ is arbitrarily fixed
and $\|\cdot\|_q$ denotes the norm of the space $L^q(D)$.
It should be emphasized that those results are
very nontrivial because the semigroup generated by the corresponding
autonomous operator (\cite{Shi08}, \cite{Shi10})
is never analytic (unless $\omega =0$)
so that the Tanabe-Sobolevskii theory of evolution operators of parabolic type
(see for instance \cite{T}, \cite{Pa}, \cite{Lu}) is no longer applicable.
This difficulty stems from the drift term
$(\omega\times x)\cdot\nabla u$ with unbounded coefficient,
which brings the spectrum of the linearized operator to the
imaginary axis even at large distance from the origin in the complex plane
(\cite{FaNN}, \cite{FaN07}, \cite{FaN08}, \cite{FaN10}).
In spite of this hyperbolic aspect, one may believe that
the linearized system \eqref{linearized} itself
is almost parabolic as PDE; in fact, it exhibits a certain smoothing
effect together with the singular behavior near the initial time.
For the autonomous case
(in which $\omega\in\mathbb R^3\setminus\{0\}$ is a constant vector),
this was observed first by the present author \cite{Hi99-1}, \cite{Hi99-2}
(see also \cite{Hi01} even for the specific non-autonomous case)
within the framework of $L^2$ and, later on, 
by Geissert, Heck and Hieber \cite{GHH} within the one of $L^q$.
Thus the result of Hansel and Rhandi \cite{HR14} may be regarded as
a desired generalization of \cite{GHH} to the non-autonomous case.
What is remarkable is that they constructed the evolution operator in their own way
without relying on any theory of abstract evolution equations
although the idea of iteration is somewhat similar to the one
in the Tanabe-Sobolevskii theory mentioned above.

The purpose of the present paper is to deduce the large
time behavior of the evolution operator $T(t,s)$ constructed by
Hansel and Rhandi \cite{HR14}, that is, 
estimate \eqref{LqLr} for all $t>s\geq 0$ and $f\in L^q_\sigma(D)$
with some constant $C>0$ independent of $s$ and $t$,
where $1<q\leq r<\infty$, see Theorem \ref{main} in the next section.
This plays a crucial role in studies of large time behavior
as well as global existence of small solutions to the initial value
problem \eqref{NS}--\eqref{IC}.
Our conditions on the translational and angular velocities are
\begin{equation}
\eta,\, \omega\in C^\theta([0,\infty); \mathbb R^3)
\cap L^\infty(0,\infty; \mathbb R^3)
\label{body}
\end{equation}
with some $\theta\in (0,1)$, which seem to be reasonable.
Generally speaking, it is not an easy task to investigate the asymptotic
behavior of the evolution operator for $(t-s)\to\infty$ especially
over unbounded spatial domains with boundaries such as
exterior domains.

When \eqref{linearized} is autonomous,
$L^q$-$L^r$ decay estimates of the semigroup,
not only \eqref{LqLr} (including even the case $r=\infty$)
but also \eqref{LqLr-grad} for $1<q\leq r\leq n$,
where $n$ denotes the space dimension and
$(3/q-3/r)/2$ should be replaced by $(n/q-n/r)/2$,
were established in the following literature
(among them, \cite{MSol}, \cite{DS99-1}, \cite{DS99-2} and \cite{Hi16}
studied more involved 2D case):

$\bullet\;\eta=0,\,\omega=0$ (Stokes semigroup)
\cite{I}, 
\cite{GS}, 
\cite{BM}, 
\cite{C}, 
\cite{MSol}, 
\cite{DS99-1}, \cite{DS99-2}; 

$\bullet\;\eta\neq 0,\,\omega=0$ (Oseen semigroup)
\cite{KS}, 
\cite{ES04}, \cite{ES05}, 
\cite{Hi16}; 

$\bullet\;\omega\neq 0,\,\eta=0$ (Stokes semigroup with rotating effect)
\cite{HiS}. 

\noindent
One of the effectual methods developed there 
(especially in \cite{I}, \cite{DS99-1}, \cite{KS}, 
\cite{ES04}, \cite{HiS}, \cite{Hi16})
is spectral analysis,
which is based on the principle that the regularity of the resolvent
near $\lambda =0$ implies the decay of the semigroup for $t\to\infty$,
where $\lambda$ stands for the resolvent parameter, however,
one does not have enough knowledge about such
analysis for the non-autonomous case (except the time-periodic case).
We will thus employ rather elementary approach with some contrivance.
In this case several energy estimates for derivatives
of solutions could help us
(indeed they are not enough but would clue us to the end
as in \cite{MSol}), however, those estimates do not seem to be
very useful for \eqref{linearized} except the first energy relation,
see Remark \ref{2nd-en}.
The only fine knowledge would be the $L^q$-$L^r$ decay estimate of the solution
to the same system in the whole space $\mathbb R^3$ (Lemma \ref{wh-est}).
Therefore, it is reasonable to decompose the solution
$u(t)=T(t,s)f$ of \eqref{linearized} into a suitable modification
of the associated flow in the whole space and the remaining part $v(t)$.
Then our main task is to derive the $L^r$-boundedness
of $v(t)$ uniformly in time for $r\in (2,\infty)$ by duality argument
with use of the first energy relation of the backward adjoint system.
Once we have that for some $r_0\in (2,\infty)$,
we are led to the $L^{r_0^\prime}$-boundedness of the adjoint evolution
operator $T(t,s)^*$,
where $1/r_0^\prime+1/r_0=1$, from which with the aid of the energy relation above
we obtain the $L^q$-$L^r$ estimate of
$T(t,s)^*$ for $r_0^\prime\leq q\leq r\leq 2$ (Lemma \ref{auxi}).
This argument itself is similar to the one adopted by
Maremonti and Solonnikov \cite{MSol} for the Stokes semigroup.
But, differently from theirs, we are at the beginning forced to take
the exponent $r_0$ close to $2$
because of less information about the evolution operator
itself (specifically, lack of useful information about energy estimates
for derivatives as mentioned above).
The idea is to repeat the argument above with better information at hand,
that is, the aforementioned $L^q$-$L^2$ estimate of $T(t,s)^*$
although $q\in (1,2)$ must be taken close to $2$.
We then deduce the $L^{r_0}$-boundedness of $v(t)$ for some $r_0\in (2,\infty)$
larger than before.
Such a sort of bootstrap argument eventually leads to the $L^q$-$L^r$
estimate \eqref{LqLr} with $2\leq q\leq r<\infty$ for all $t>s\geq 0$
as well as the estimate of the adjoint
\begin{equation}
\|T(t,s)^*g\|_r\leq C(t-s)^{-(3/q-3/r)/2}\|g\|_q,
\label{LqLr-adj}
\end{equation}
for all $t>s\geq 0$ and $g\in L^q_\sigma(D)$,
where $1<q\leq r\leq 2$.
In this way, estimates \eqref{LqLr} and \eqref{LqLr-adj}
are discussed simultaneously throughout the proof.
It is also possible to carry out the same procedure in which 
$T(t,s)$ and its adjoint $T(t,s)^*$ are replaced each other,
so that we obtain \eqref{LqLr} with $1<q\leq r\leq 2$
as well as \eqref{LqLr-adj} with $2\leq q\leq r<\infty$.
As a consequence, by the semigroup property of the evolution operator,
both \eqref{LqLr} and \eqref{LqLr-adj} are proved
for all $t>s\geq 0$ whenever $1<q\leq r<\infty$.

The problem under consideration is physically relevant in 2D as well,
however, our approach does not work in this case, see Remark \ref{2dim}.
Neither does it for deduction of
\eqref{LqLr-grad} for all $t>s\geq 0$ even in 3D, where $1<q\leq r\leq 3$;
indeed, it turns out that $\nabla T(t,s)f$ decays at least at the same rate
as \eqref{LqLr}, see Remark \ref{grad-pointwise},
but of course this weak decay property is useless.
The optimal gradient estimate \eqref{LqLr-grad} for all $t>s\geq 0$
is needed to solve the Navier-Stokes initial value
problem \eqref{NS}--\eqref{IC} globally in time 
as long as one follows the standard way as in Kato \cite{Ka}.
Nevertheless, in this paper, we propose another way for construction
of a unique global solution without using any pointwise decay of the gradient
of the evolution operator such as \eqref{LqLr-grad},
see Theorem \ref{main-NS}.
This approach seems to be new to the best of our knowledge.
In the duality formulation of the Navier-Stokes system
in terms of the adjoint evolution operator $T(t,s)^*$
(where this formulation itself is not new,
see \cite{KoO}, \cite{BM95}, \cite{KoY} and \cite{Y}),
our idea is to combine the energy relation of the backward adjoint
system with \eqref{LqLr-adj}
so as to deduce the large time behavior comparable to the optimal
pointwise decay of $\nabla T(t,s)^*$,
see Theorem \ref{int-decay} and Lemma \ref{key}.

This paper is organized as follows.
In the next section we introduce the evolution operator and study its adjoint.
We then provide the main theorems (Theorem \ref{main} and Theorem \ref{int-decay},
where the latter is a corollary to the former).
In Section \ref{pre} we give some preparatory results.
The central part of this paper is Section \ref{proof},
which is devoted to the proof of Theorem \ref{main}.
An application to the Navier-Stokes initial value problem
\eqref{NS}--\eqref{IC} is discussed in the final section.

\section{Evolution operator and its adjoint}
\label{evolu}

{\em 2.1. Notation}

Let us begin with introducing basic notation.
Given a domain $G\subset \mathbb R^3$, $q\in [1,\infty]$ and integer $k\geq 0$,
we denote the standard Lebesgue and Sobolev spaces by
$L^q(G)$ and by $W^{k,q}(G)$.
We abbreviate the norm $\|\cdot\|_{q,G}=\|\cdot\|_{L^q(G)}$ and even
$\|\cdot\|_q=\|\cdot\|_{q,D}$, where $D$ is the exterior domain
under consideration with $\partial D\in C^{1,1}$.
Let $C_0^\infty(G)$ be the class of all $C^\infty$ functions
with compact support in $G$, then $W^{k,q}_0(G)$ stands for
the completion of $C_0^\infty(G)$ in $W^{k,q}(G)$.
By $\langle\cdot,\cdot\rangle_G$ we denote various
duality pairings over the domain $G$.
In what follows we adopt the same symbols for denoting
vector and scalar function spaces as long as there is no confusion.
Let $X$ be a Banach space.
Then ${\cal L}(X)$ denotes the Banach space consisting of all
bounded linear operators from $X$ into itself.

We also introduce the solenoidal function space.
Let $G\subset\mathbb R^3$ be one of the following domains;
the exterior domain $D$ under consideration,
a bounded domain with $C^{1,1}$-boundary $\partial G$ and
the whole space $\mathbb R^3$.
The class $C_{0,\sigma}^\infty(G)$ consists of
all divergence-free vector fields being in $C_0^\infty(G)$.
Let $1<q<\infty$.
The space $L^q_\sigma(G)$ denotes the completion of $C_{0,\sigma}^\infty(G)$
in $L^q(G)$.
Then it is characterized as
\[
L^q_\sigma(G)=\{u\in L^q(G);\,\mbox{div $u$}=0,\, \nu\cdot u|_{\partial G}=0\},
\]
where $\nu$ stands for the outer unit normal to $\partial G$
and $\nu\cdot u$ is understood in the sense of normal trace on $\partial G$
(this boundary condition is absent when $G=\mathbb R^3$).
The space of $L^q$-vector fields admits the Helmholtz decomposition
\[
L^q(G)=L^q_\sigma(G)\oplus
\{\nabla p\in L^q(G);\, p\in L^q_{loc}(\overline{G})\},
\]
which was proved by Fujiwara and Morimoto \cite{FM},
Miyakawa \cite{Mi} and Simader and Sohr \cite{SiS}.
By $P_G=P_{G,q}: L^q(G)\to L^q_\sigma(G)$,
we denote the Fujita-Kato projection
associated with the decompostion above.
Note the duality relation $(P_{G,q})^*=P_{G,q^\prime}$,
where $1/q^\prime+1/q=1$.
We simply write $P=P_D$ for the exterior domain $D$ under consideration.
Finally, we denote several positive constants by $C$, which may
change from line to line.
\bigskip

\noindent
{\em 2.2. Evolution operator}

Suppose that
\begin{equation}
\eta,\, \omega\in C^\theta_{loc}([0,\infty);\,\mathbb R^3)
\label{body-0}
\end{equation}
for some $\theta\in (0,1)$.
We set
\begin{equation*}
\begin{split}
&|(\eta,\omega)|_{0,{\cal T}}
=\sup_{0\leq t\leq {\cal T}}\big(|\eta(t)|+|\omega(t)|\big),  \\
&|(\eta,\omega)|_{\theta,{\cal T}}=\sup_{0\leq s<t\leq {\cal T}}
\frac{|\eta(t)-\eta(s)|+|\omega(t)-\omega(s)|}{(t-s)^\theta},
\end{split}
\end{equation*}
for ${\cal T}\in (0,\infty)$.
Let $1<q<\infty$, then the linear operator
$L_\pm(t)$ relating to the exterior problem \eqref{linearized} is defined by
\begin{equation}
\begin{split}
&D_q(L_\pm(t))=\{u\in L^q_\sigma(D)\cap W_0^{1,q}(D)\cap W^{2,q}(D);\,
(\omega(t)\times x)\cdot\nabla u\in L^q(D)\}, \\
&L_\pm(t)u=-P[\Delta u\pm(\eta(t)+\omega(t)\times x)\cdot\nabla u
\mp \omega(t)\times u].
\end{split}
\label{generator}
\end{equation}
Indeed, \eqref{linearized} is reduced to the initial value problem
\begin{equation}
\partial_t u(t)+L_+(t)u(t)=0, \qquad t\in [s,\infty); \qquad u(s)=f,
\label{evo-linear}
\end{equation}
in $L^q_\sigma(D)$.
Since the domain $D_q(L_\pm(t))$ is dependent of $t$,
the space
\begin{equation}
Y_q(D):=\{u\in L^q_\sigma(D)\cap W_0^{1,q}(D)\cap W^{2,q}(D);\,
|x|\nabla u\in L^q(D)\},
\label{reg-sp}
\end{equation}
which is contained in
$D_q(L_\pm(t))$ for every $t$,
plays a role, see \cite{HR14}, in which
Hansel and Rhandi proved the following proposition.
\begin{proposition}
Suppose that $\eta$ and $\omega$ fulfill \eqref{body-0}
for some $\theta\in (0,1)$.
Let $1<q<\infty$.
The operator family
$\{L_+(t)\}_{t\geq 0}$ generates an evolution operator
$\{T(t,s)\}_{t\geq s\geq 0}$
on $L^q_\sigma(D)$ such that
$T(t,s)$ is a bounded linear operator from $L^q_\sigma(D)$
into itself with the semigroup property
\begin{equation}
T(t,\tau)T(\tau,s)=T(t,s)\qquad (t\geq \tau\geq s\geq 0);
\qquad T(s,s)=I,
\label{semi}
\end{equation}
in ${\cal L}(L^q_\sigma(D))$ and that the map
\[
\{t\geq s\geq 0\}\ni (t,s)\mapsto
T(t,s)f\in L^q_\sigma(D)
\]
is continuous for every $f\in L^q_\sigma(D)$.
Furthermore, we have the following properties.

\begin{enumerate}
\item

Let $q\leq r<\infty$.
For each ${\cal T}\in (0,\infty)$ and $m\in (0,\infty)$,
there is a constant $C=C({\cal T},m,q,r,\theta,D)>0$ such that
\eqref{LqLr} and \eqref{LqLr-grad} hold for all $(t,s)$ with
$0\leq s<t\leq {\cal T}$ and $f\in L^q_\sigma(D)$ whenever
\[
|(\eta,\omega)|_{0,{\cal T}}+|(\eta,\omega)|_{\theta,{\cal T}}\leq m.
\]
Furthermore, we have
\begin{equation}
\lim_{t\to s}\, (t-s)^{(3/q-3/r)/2+j/2}\|\nabla^j T(t,s)f\|_r=0
\label{short}
\end{equation}
for all $f\in L^q_\sigma(D)$ and $j=0,1$, except when $j=0,\, r=q$.

\item
Fix $s\geq 0$.
For every $f\in Y_q(D)$ and $t\in [s,\infty)$, we have
$T(t,s)f\in Y_q(D)$ and
\begin{equation}
T(\cdot,s)f\in C^1([s,\infty); L^q_\sigma(D))
\label{evo-cl}
\end{equation}
with
\begin{equation}
\partial_t T(t,s)f+L_+(t)T(t,s)f=0,
\qquad t\in [s,\infty),
\label{evo-1}
\end{equation}
in $L^q_\sigma(D)$.

\item
Fix $t\geq 0$.
For every $f\in Y_q(D)$, we have
\begin{equation}
T(t,\cdot)f\in C^1([0,t]; L^q_\sigma(D))
\label{evo-cl-2}
\end{equation}
with
\begin{equation}
\partial_s T(t,s)f=T(t,s)L_+(s)f,
\qquad s\in [0,t],
\label{evo-2}
\end{equation}
in $L^q_\sigma(D)$.

\end{enumerate}
\label{HR-thm}
\end{proposition}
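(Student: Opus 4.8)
The plan is to construct $\{T(t,s)\}$ by a parametrix (localization) method, since the unbounded drift coefficient $\omega(t)\times x$ rules out both analytic semigroup theory and any global treatment of the full operator. First I would fix a large ball $B_R\supset\mathbb R^3\setminus D$ and a cutoff $\phi\in C^\infty$ with $\phi\equiv 1$ on a neighborhood of $\partial D$ and $\phi\equiv 0$ outside $B_R$. Away from the boundary the rotation term has unbounded coefficient but no boundary condition is felt, so there I would use an explicit whole-space evolution operator; near the boundary the coefficient $\omega(t)\times x$ is bounded, so there $L_\pm(t)$ in \eqref{generator} is a H\"older-continuous, relatively bounded (lower-order) perturbation of the Stokes operator on a bounded domain, and the classical Tanabe--Sobolevskii theory applies. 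The two pieces are then pasted together, and the mismatch---supported in the annulus $\operatorname{supp}\nabla\phi$, where all coefficients are bounded---is absorbed by an iteration.

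For the whole-space building block I would remove the rotation by a time-dependent orthogonal change of variables. Let $Q(t)\in SO(3)$ solve $\dot Q=\Omega(t)Q$, $Q(0)=I$, where $\Omega(t)z=\omega(t)\times z$; then $u(x,t)=Q(t)\,v\big(Q(t)^\top x,t\big)$, together with the corresponding rotation of $\eta$, converts the rotational drift $(\omega\times x)\cdot\nabla u-\omega\times u$ into a pure Oseen/heat problem with bounded, time-dependent convection $Q(t)^\top\eta(t)$ (the Laplacian is orthogonally invariant, so it survives unchanged). The resulting flow is represented by a rotated Oseen kernel, from which I would read off the $L^q$--$L^r$ smoothing bounds \eqref{LqLr}, \eqref{LqLr-grad} on $\mathbb R^3$. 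This is the only place where the unbounded coefficient is genuinely handled, and it is precisely the step that is \emph{not} available by perturbation theory.

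Next I would form the approximate evolution operator $E(t,s)f=(1-\phi)\,U(t,s)\big[(1-\phi)f\big]+\phi\,S(t,s)\big[\phi f\big]$, where $U$ is the whole-space operator and $S$ the bounded-domain Stokes-type evolution operator, restoring the solenoidal/boundary constraints by correcting the cutoffs with the Bogovskii operator (since $\operatorname{div}(\phi u)=\nabla\phi\cdot u\neq 0$). Applying $\partial_t+L_+(t)$ produces an error $R(t,s)f$ built entirely from commutators of $\phi$ with the second- and first-order operators, hence from terms with coefficients supported in $\operatorname{supp}\nabla\phi$ and therefore bounded; using the whole-space and bounded-domain estimates one checks that $R(t,s)$ has at most a singularity $(t-s)^{-1/2+\delta}$ as $t\to s$. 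I would then solve the Volterra equation $T(t,s)=E(t,s)+\int_s^t T(t,\tau)R(\tau,s)\,d\tau$ by a Neumann series; because the kernel has an integrable singularity, the series converges in $\mathcal L(L^q_\sigma(D))$ uniformly on each $[0,\mathcal T]$, yielding the operator together with the semigroup property \eqref{semi} and continuity. The estimates \eqref{LqLr}, \eqref{LqLr-grad} pass from $E$ to $T$ term by term, and \eqref{short} follows from these bounds combined with strong continuity at $t=s$ and a density argument.

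Finally, for $f\in Y_q(D)$ the regularity $T(t,s)f\in Y_q(D)$ and the forward equation \eqref{evo-1} come from differentiating the convergent series and checking that every building block maps $Y_q(D)$ into itself; the class $Y_q(D)$ in \eqref{reg-sp} is designed so that $|x|\nabla u\in L^q$ is preserved, which is exactly what makes sense of the unbounded drift in \eqref{generator} and legitimizes the differentiation. The backward identity \eqref{evo-2} I would obtain from the forward one by differentiating the semigroup relation $T(t,s)=T(t,\tau)T(\tau,s)$ in $\tau$, or equivalently via the adjoint system. The step I expect to be the main obstacle is controlling the iteration: one must simultaneously keep the commutator error integrable in time \emph{and} compatible with the divergence and boundary constraints (the Bogovskii correction itself feeds new lower-order terms back into $R$), so that the bootstrap closes in $L^q_\sigma(D)$ for the full range $1<q<\infty$, uniformly on bounded time intervals.
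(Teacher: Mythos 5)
Your proposal is correct and follows essentially the same route as the paper's source for Proposition \ref{HR-thm}, namely the construction of Hansel and Rhandi \cite{HR14} which the paper itself recounts in Section \ref{pre}: an explicit whole-space evolution operator obtained by the rotating change of variables (cf.\ \eqref{formula-wh}), a bounded-domain evolution operator supplied by Tanabe--Sobolevskii theory applied to the Stokes operator plus lower-order terms (cf.\ Lemma \ref{bdd-further}), a cut-off pasting whose divergence mismatch is corrected by the Bogovskii operator of Lemma \ref{bogov}, and a Volterra-type iteration with integrable kernel singularity (the iterated-convolution lemma of \cite{HR14}). The differences are only cosmetic (e.g.\ the sign convention for $Q(t)$ and the exact form of the Volterra equation), so no further comparison is needed.
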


We take $R_0>0$ satisfying
\begin{equation}
\mathbb R^3\setminus D\subset B_{R_0}:=\{x\in\mathbb R^3;\,|x|<R_0\}
\label{cov}
\end{equation}
and fix $\zeta\in C^\infty([0,\infty))$ such that
$\zeta(\rho)=1$ for $\rho\leq 1$ and
$\zeta(\rho)=0$ for $\rho\geq 2$.
We set $\phi_R(x)=\zeta(|x|/R)$ for $R\in [R_0,\infty)$, then
$\nabla \phi_R(x)=\zeta^\prime(|x|/R)\,x/(R|x|)$.
Let $f\in D_q(L_\pm(t))$ and $g\in D_{q^\prime}(L_\mp(t))$, where
$1/q^\prime +1/q=1$, then we have
\begin{equation*}
\begin{split}
&\quad \int_D[\{(\eta+\omega\times x)\cdot\nabla f\}\cdot g
+f\cdot\{(\eta+\omega\times x)\cdot\nabla g\}]\,\phi_R\,dx  \\
&=-\int_{R<|x|<2R}(f\cdot g)(\eta+\omega\times x)\cdot\nabla\phi_R\,dx.
\end{split}
\end{equation*}
Since $(\omega\times x)\cdot\nabla\phi_R=0$ and since 
$f\cdot g\in L^1(D)$, passing to the limit as $R\to\infty$ yields
\[
\langle (\eta+\omega\times x)\cdot\nabla f,\, g\rangle_D
+\langle f,\, (\eta+\omega\times x)\cdot\nabla g\rangle_D=0,
\]
which means that the non-autonomous terms of $L_\pm(t)$ are skew-symmetric.
We thus obtain
\begin{equation}
\langle L_\pm(t)f, g\rangle_D
=\langle f, L_\mp(t)g\rangle_D
\label{dual-1}
\end{equation}
for all 
$f\in D_q(L_\pm(t))$ and $g\in D_{q^\prime}(L_\mp(t))$.
If in particular $q=2$, then we find
\begin{equation}
\langle L_\pm(t)f, f\rangle_D=\|\nabla f\|_2^2
\label{coercive}
\end{equation}
for all $f\in D_2(L_\pm(t))$, which together with \eqref{evo-1} implies
the energy equality
\begin{equation}
\frac{1}{2}\,\partial_t\|T(t,s)f\|_2^2+\|\nabla T(t,s)f\|_2^2=0
\label{energy-diff}
\end{equation}
and its integral form
\begin{equation}
\frac{1}{2}\|T(t,s)f\|_2^2
+\int_\tau^t\|\nabla T(\sigma,s)f\|_2^2 \,d\sigma
=\frac{1}{2}\|T(\tau,s)f\|_2^2
\label{energy}
\end{equation}
for all $f\in Y_2(D)$ and $t\geq\tau\geq s\geq 0$.
\bigskip

\noindent
{\em 2.3. Adjoint evolution operator}

Let us fix $t\geq 0$.
The adjoint evolution operator must be related to the backward system subject
to the final condition at $t$, that is,
\begin{equation}
-\partial_s v(s)+L_-(s)v(s)=0, \qquad s\in [0,t]; \qquad v(t)=g,
\label{backward}
\end{equation}
in $L^q_\sigma(D)$, as we will explain.
It follows from the argument of \cite{HR14}
that the operator family
$\{L_-(t-\tau)\}_{\tau\in [0,t]}$
also generates an evolution operator on $L^q_\sigma(D)$,
which we denote by
$\{\widetilde T(\tau,s;\,t)\}_{0\leq s\leq\tau\leq t}$,
with the same properties as 
in Proposition \ref{HR-thm} for $T(t,s)$.
In particular, for every $g\in Y_q(D)$, we see that
$w(\tau):=\widetilde T(\tau,0;\,t)g$ solves the initial value problem
\begin{equation}
\partial_\tau w(\tau)+L_-(t-\tau)w(\tau)=0, \qquad \tau\in [0,t]; \qquad w(0)=g,
\label{adj-auxi}
\end{equation}
in $L^q_\sigma(D)$.
We set
\begin{equation}
S(t,s):=\widetilde T(t-s,0;\,t) \qquad (t\geq s\geq 0),
\label{adjoint}
\end{equation}
then, for every $g\in Y_q(D)$ and $s\in [0,t]$, we have
$S(t,s)g\in Y_q(D)$ and
\begin{equation}
S(t,\cdot)g\in C^1([0,t]; L^q_\sigma(D))
\label{adj-reg}
\end{equation}
with
\begin{equation}
\partial_s S(t,s)g=L_-(s)S(t,s)g, \qquad s\in [0,t],
\label{evo-adj-1}
\end{equation}
in $L^q_\sigma(D)$.
Namely, given $g\in Y_q(D)$,
\begin{equation}
v(s):=S(t,s)g=w(t-s)
\label{adj-2}
\end{equation}
provides a solution to \eqref{backward}.
Furthermore, $S(t,s)$ enjoys the same $L^q$-$L^r$ smoothing action
($1<q\leq r<\infty$) near the final time as in 
\eqref{LqLr}--\eqref{LqLr-grad} for $0\leq s<t\leq {\cal T}$
with some constant $C=C({\cal T})>0$,
where ${\cal T}\in (0,\infty)$ is arbitrary.
As for the energy relation to \eqref{backward}, one uses \eqref{coercive}
to get
\begin{equation}
\frac{1}{2}\,\partial_s\|S(t,s)g\|_2^2=\|\nabla S(t,s)g\|_2^2
\label{energy-adj-diff}
\end{equation}
and its integral form
\begin{equation}
\frac{1}{2}\|S(t,s)g\|_2^2+\int_s^\tau\|\nabla S(t,\sigma)g\|_2^2\,d\sigma
=\frac{1}{2}\|S(t,\tau)g\|_2^2
\label{energy-adj}
\end{equation}
for all $g\in Y_2(D)$ and $t\geq\tau\geq s\geq 0$.

We will show the following lemma.
\begin{lemma}
Let $1<q<\infty$.
Under the same conditions as in Proposition \ref{HR-thm}, we
have the duality relation 
\[
T(t,s)^*=S(t,s), \qquad S(t,s)^*=T(t,s),
\]
in ${\cal L}(L^q_\sigma(D))$ for $t\geq s\geq 0$.
\label{dual-2}
\end{lemma}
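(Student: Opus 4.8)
The plan is to establish the pairing identity
\[
\langle T(t,s)f,\; g\rangle_D = \langle f,\; S(t,s)g\rangle_D
\qquad (f\in L^q_\sigma(D),\ g\in L^{q^\prime}_\sigma(D)),
\]
which, read against the $L^q$-$L^{q^\prime}$ duality, is precisely the assertion $T(t,s)^*=S(t,s)$; the companion relation $S(t,s)^*=T(t,s)$ then follows on taking adjoints once more, since $L^q_\sigma(D)$ is reflexive for $1<q<\infty$ and bounded operators satisfy $(A^*)^{*}=A$. I would first prove the identity for smooth data and then pass to the limit by density.

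Fix $t\geq s\geq 0$, take $f\in Y_q(D)$ and $g\in Y_{q^\prime}(D)$, and introduce the scalar function
\[
\Phi(\sigma):=\langle T(\sigma,s)f,\; S(t,\sigma)g\rangle_D,\qquad \sigma\in[s,t].
\]
The goal is to show $\Phi$ is constant. By part 2 of Proposition \ref{HR-thm} we have $T(\sigma,s)f\in Y_q(D)$ with $T(\cdot,s)f\in C^1([s,\infty);L^q_\sigma(D))$ and $\partial_\sigma T(\sigma,s)f=-L_+(\sigma)T(\sigma,s)f$ by \eqref{evo-1}, while \eqref{adj-reg}--\eqref{evo-adj-1} give $S(t,\sigma)g\in Y_{q^\prime}(D)$ with $S(t,\cdot)g\in C^1([0,t];L^{q^\prime}_\sigma(D))$ and $\partial_\sigma S(t,\sigma)g=L_-(\sigma)S(t,\sigma)g$. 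Since the duality pairing is bounded bilinear and both factors are $C^1$ in $\sigma$, the product rule yields
\[
\Phi^\prime(\sigma)=-\langle L_+(\sigma)T(\sigma,s)f,\; S(t,\sigma)g\rangle_D
+\langle T(\sigma,s)f,\; L_-(\sigma)S(t,\sigma)g\rangle_D.
\]
Because $T(\sigma,s)f\in Y_q(D)\subset D_q(L_+(\sigma))$ and $S(t,\sigma)g\in Y_{q^\prime}(D)\subset D_{q^\prime}(L_-(\sigma))$, the skew-symmetry relation \eqref{dual-1} applies and forces the two terms to coincide, so $\Phi^\prime(\sigma)=0$ on $[s,t]$. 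Evaluating the constant $\Phi$ at the endpoints via $T(s,s)=I$ and $S(t,t)=I$ gives $\Phi(s)=\langle f,\,S(t,s)g\rangle_D$ and $\Phi(t)=\langle T(t,s)f,\,g\rangle_D$, which is the desired identity for all $f\in Y_q(D)$, $g\in Y_{q^\prime}(D)$. Finally, since $C_{0,\sigma}^\infty(D)\subset Y_q(D)$, the space $Y_q(D)$ is dense in $L^q_\sigma(D)$ (and likewise $Y_{q^\prime}(D)$ in $L^{q^\prime}_\sigma(D)$), and both $T(t,s)$ and $S(t,s)$ are bounded on the respective spaces, so the identity extends to all $f\in L^q_\sigma(D)$ and $g\in L^{q^\prime}_\sigma(D)$ by continuity.

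The density argument and the endpoint evaluation are routine; the step demanding care is the differentiation of $\Phi$. One must confirm that the two factors are genuinely $C^1$ as $L^q_\sigma$- and $L^{q^\prime}_\sigma$-valued curves on the full closed interval (one-sided derivatives at $\sigma=s$ and $\sigma=t$), that their values remain in the respective operator domains so that \eqref{dual-1} is legitimately invoked at each $\sigma$, and that the bilinear pairing obeys the product rule for two $C^1$ Banach-space-valued maps. All these regularity prerequisites are supplied by Proposition \ref{HR-thm} together with its analogue for $S(t,s)$ recorded in \eqref{adj-reg}--\eqref{evo-adj-1}; once they are in hand, the cancellation is immediate from the skew-symmetry of the non-autonomous part of $L_\pm$.
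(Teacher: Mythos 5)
Your proposal is correct and follows essentially the same argument as the paper: both introduce the pairing $\langle T(\sigma,s)f,\,S(t,\sigma)g\rangle_D$ for data in the regularity classes $Y_q(D)$, $Y_{q^\prime}(D)$, differentiate in $\sigma$ using \eqref{evo-1}, \eqref{evo-adj-1} and the skew-symmetry relation \eqref{dual-1} to see it is constant, evaluate at the endpoints, and conclude by density of $Y_q(D)$ in $L^q_\sigma(D)$. The only cosmetic difference is that you obtain $S(t,s)^*=T(t,s)$ via reflexivity and $(A^*)^*=A$, whereas the paper reads the same pairing identity in both directions; these are interchangeable.
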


\begin{proof}
We fix $s$ and $t$ as above.
Let $f\in Y_{q^\prime}(D)$ and $g\in Y_q(D)$, where
$1/q^\prime +1/q=1$.
By virtue of \eqref{evo-1}, \eqref{dual-1} and \eqref{evo-adj-1} we observe
\begin{equation*}
\begin{split}
&\quad \partial_\tau\langle T(\tau,s)f,\, S(t,\tau)g\rangle_D  \\
&=\langle -L_+(\tau)T(\tau,s)f,\, S(t,\tau)g\rangle_D
+\langle T(\tau,s)f,\, L_-(\tau)S(t,\tau)g\rangle_D =0
\end{split}
\end{equation*}
for $\tau\in [s,t]$.
This implies that
\[
\langle T(t,s)f,\, g\rangle_D
=\langle f,\, S(t,s)g\rangle_D
\]
for $f\in Y_{q^\prime}(D)$ and $g\in Y_q(D)$;
by continuity, we have the same relation
for all $f\in L^{q^\prime}_\sigma(D)$ and $g\in L^q_\sigma(D)$ since
$Y_q(D)$ is dense in $L^q_\sigma(D)$.
This concludes $T(t,s)^*=S(t,s)$ in ${\cal L}(L^q_\sigma(D))$ and
$S(t,s)^*=T(t,s)$ in ${\cal L}(L^{q^\prime}_\sigma(D))$.
\end{proof}

Lemma \ref{dual-2} leads to the following corollary.
\begin{corollary}
Let $1<q<\infty$ and $1/q^\prime+1/q=1$.
Under the same conditions as in Proposition \ref{HR-thm},
we have the following.

\begin{enumerate}
\item
The backward semigroup property
\begin{equation}
S(\tau,s)S(t,\tau)=S(t,s) \qquad (t\geq\tau\geq s\geq 0); \qquad
S(t,t)=I,
\label{b-semi}
\end{equation}
holds in ${\cal L}(L^q_\sigma(D))$.

\item
Fix $s\geq 0$.
For every $f\in Y_{q^\prime}(D)$ and $g\in Y_q(D)$ the map
\[
[s,\infty)\ni t\mapsto \langle f,\, S(t,s)g\rangle_D
\]
is differentiable and
\begin{equation}
\partial_t \langle f,\, S(t,s)g\rangle_D
+\langle f,\, S(t,s)L_-(t)g\rangle_D=0, \qquad t\in [s,\infty).
\label{evo-adj-2}
\end{equation}
\end{enumerate}
\label{cor-adj}
\end{corollary}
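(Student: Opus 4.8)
The plan is to obtain both assertions directly from the forward evolution operator $T(t,s)$ together with the duality relation $T(t,s)^*=S(t,s)$ of Lemma \ref{dual-2}, so that essentially no new analysis is needed.

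For the first assertion I would transpose the forward semigroup property \eqref{semi}. Applying $(AB)^*=B^*A^*$ to $T(t,\tau)T(\tau,s)=T(t,s)$ (read on $L^{q^\prime}_\sigma(D)$) and using Lemma \ref{dual-2} termwise gives
\begin{equation*}
S(\tau,s)S(t,\tau)=T(\tau,s)^*T(t,\tau)^*=\big(T(t,\tau)T(\tau,s)\big)^*=T(t,s)^*=S(t,s)
\end{equation*}
in ${\cal L}(L^q_\sigma(D))$, while $S(t,t)=T(t,t)^*=I^*=I$ follows from $T(t,t)=I$. This is precisely \eqref{b-semi}.

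For the second assertion the starting point is the identity $\langle f,S(t,s)g\rangle_D=\langle T(t,s)f,g\rangle_D$ established in the proof of Lemma \ref{dual-2}, now read for $f\in Y_{q^\prime}(D)$ and $g\in Y_q(D)$. Since $f\in Y_{q^\prime}(D)$, the second part of Proposition \ref{HR-thm} gives $T(\cdot,s)f\in C^1([s,\infty);L^{q^\prime}_\sigma(D))$, whence $t\mapsto\langle T(t,s)f,g\rangle_D$ is differentiable; this settles the differentiability claim. Differentiating and invoking \eqref{evo-1} I would write
\begin{equation*}
\partial_t\langle f,S(t,s)g\rangle_D=\langle\partial_t T(t,s)f,g\rangle_D=-\langle L_+(t)T(t,s)f,g\rangle_D.
\end{equation*}
Here $T(t,s)f\in Y_{q^\prime}(D)\subset D_{q^\prime}(L_+(t))$ again by Proposition \ref{HR-thm}, and $g\in Y_q(D)\subset D_q(L_-(t))$, so the skew-symmetry relation \eqref{dual-1} applies and yields $\langle L_+(t)T(t,s)f,g\rangle_D=\langle T(t,s)f,L_-(t)g\rangle_D$. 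Finally, since $L_-(t)g\in L^q_\sigma(D)$, I apply the identity $\langle T(t,s)f,h\rangle_D=\langle f,S(t,s)h\rangle_D$ once more with $h=L_-(t)g$ to conclude
\begin{equation*}
\partial_t\langle f,S(t,s)g\rangle_D=-\langle T(t,s)f,L_-(t)g\rangle_D=-\langle f,S(t,s)L_-(t)g\rangle_D,
\end{equation*}
which is \eqref{evo-adj-2}.

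The computation is short, so the only genuine care goes into the bookkeeping of domains and conjugate indices: at each use of \eqref{dual-1} one must verify that the paired vectors lie in $D_{q^\prime}(L_+(t))$ and $D_q(L_-(t))$ respectively, and that $h=L_-(t)g$ stays in $L^q_\sigma(D)$ so the bounded-operator identity from Lemma \ref{dual-2} can be reused. Both hold because of the invariance $T(t,s)Y_{q^\prime}(D)\subset Y_{q^\prime}(D)$ and the inclusion $Y_q(D)\subset D_q(L_-(t))$, which is exactly why the statement is phrased for $f\in Y_{q^\prime}(D)$ and $g\in Y_q(D)$. I expect this domain/index verification, rather than any analytic difficulty, to be the only (and minor) obstacle.
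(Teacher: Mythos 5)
Your proof is correct and takes essentially the same route as the paper: part 1 by transposing the semigroup property \eqref{semi} through Lemma \ref{dual-2}, and part 2 by combining the duality relation with the skew-symmetry \eqref{dual-1} and the forward equation \eqref{evo-cl}--\eqref{evo-1}. The only cosmetic difference is that the paper forms a single difference-quotient identity and passes to the limit, whereas you differentiate $\langle T(t,s)f,\,g\rangle_D$ directly using the $C^1$ regularity; the ingredients and domain bookkeeping are identical.
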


\begin{proof}
The first assertion follows from \eqref{semi} and Lemma \ref{dual-2}.
Let $f\in Y_{q^\prime}(D)$ and $g\in Y_q(D)$.
Lemma \ref{dual-2} together with \eqref{dual-1} then implies
\begin{equation*}
\begin{split}
&\quad \left\langle f,\; \frac{S(t+h,s)g-S(t,s)g}{h}+S(t,s)L_-(t)g \right\rangle_D \\
&=\left\langle \frac{T(t+h,s)f-T(t,s)f}{h}+L_+(t)T(t,s)f,\; g \right\rangle_D
\end{split}
\end{equation*}
for $t,\, t+h\in [s,\infty)$.
Passing to the limit as $h\to 0$ leads to the second assertion
on account of \eqref{evo-cl}--\eqref{evo-1}.
\end{proof}
\medskip

\noindent
{\em 2.4. Main results}

We are in a position to give our main results on decay properties
of both $T(t,s)$ and $T(t,s)^*$
when further conditions \eqref{body} for some $\theta\in (0,1)$
are imposed on the translational and angular velocities.
Set
\begin{equation*}
\begin{split}
&|(\eta,\omega)|_0
=\sup_{t\geq 0}\big(|\eta(t)|+|\omega(t)|\big),  \\
&|(\eta,\omega)|_\theta
=\sup_{t>s\geq 0}\frac{|\eta(t)-\eta(s)|+|\omega(t)-\omega(s)|}{(t-s)^\theta}.
\end{split}
\end{equation*}
\begin{theorem}
Suppose that $\eta$ and $\omega$ fulfill \eqref{body} for some $\theta\in (0,1)$.
Let $1<q\leq r<\infty$.
For each $m\in (0,\infty)$, there is a constant
$C=C(m,q,r,\theta,D)>0$ such that both \eqref{LqLr} and \eqref{LqLr-adj}
hold for all $t>s\geq 0$ and $f,\, g\in L^q_\sigma(D)$ whenever
\begin{equation}
|(\eta,\omega)|_0+|(\eta,\omega)|_\theta\leq m.
\label{mag}
\end{equation}
\label{main}
\end{theorem}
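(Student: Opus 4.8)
The plan is to follow the duality-plus-bootstrap scheme sketched in the introduction, proving \eqref{LqLr} first for $2\le q\le r<\infty$ together with \eqref{LqLr-adj} for $1<q\le r\le 2$, and then recovering the complementary ranges by exchanging the roles of $T(t,s)$ and $T(t,s)^*$ and invoking the semigroup property \eqref{semi}. The only global-in-time inputs I would rely on are the first energy relations \eqref{energy} and \eqref{energy-adj}, which yield the contraction $\|T(t,s)f\|_2\le\|f\|_2$ and the time-integrability of $\nabla S(t,\cdot)g$ in $L^2$; the whole-space decay estimate (Lemma \ref{wh-est}); the local-in-time smoothing of Proposition \ref{HR-thm}; and the duality $S(t,s)=T(t,s)^*$ of Lemma \ref{dual-2}.

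First I would set up a whole-space comparison. Writing $u(t)=T(t,s)f$, I would take the solution of the same drift system in $\mathbb R^3$ furnished by Lemma \ref{wh-est} with suitably extended data, cut it off away from the obstacle by $\phi_R$, and restore the solenoidal constraint by a Bogovskii-type corrector supported in the shell $R<|x|<2R$. Subtracting this modified whole-space flow from $u$ leaves a remainder $v(t)$ that solves the linearized exterior problem with a forcing term that is compactly supported—arising from the commutator $[\Delta,\phi_R]$, from $(\eta+\omega\times x)\cdot\nabla\phi_R$, and from the corrector—and that is controlled on a bounded region by the estimates already at hand. Since $(\omega\times x)\cdot\nabla\phi_R=0$, the unbounded coefficient never enters the forcing, which is exactly what makes the localization harmless.

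The core step is to bound $v(t)$ in $L^{r_0}$ uniformly in $t$ by duality: I would represent $\langle v(t),g\rangle_D$ through a Duhamel formula against the backward adjoint evolution $S(t,\cdot)=T(t,\cdot)^*$ and estimate the time integral using \eqref{energy-adj} to absorb $\int_s^t\|\nabla S(t,\sigma)g\|_2^2\,d\sigma$, together with the local smoothing near the endpoints and the whole-space decay for the compactly supported source. Because only the first energy relation is available, this forces the starting exponent $r_0$ to lie close to $2$. The uniform $L^{r_0}$-bound on $v$, combined with the decay of the whole-space part, gives uniform $L^{r_0}$-$L^{r_0}$ boundedness of $T(t,s)$, hence $L^{r_0'}$-boundedness of $T(t,s)^*$ by duality; feeding this into \eqref{energy-adj} through a Nash/Gagliardo--Nirenberg-type argument converts boundedness into a genuine $L^2$ decay rate for $T(t,s)^*$, and interpolation then yields the $L^q$-$L^r$ estimate on the triangle $r_0'\le q\le r\le 2$ (the content of the auxiliary Lemma \ref{auxi}). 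I would then iterate: reinserting the improved $L^q$-$L^2$ estimate of $T(t,s)^*$, with $q$ close to $2$, into the duality bound for $v$ enlarges the admissible $r_0$, and a convergent bootstrap exhausts $(2,\infty)$, establishing \eqref{LqLr} for $2\le q\le r<\infty$ and \eqref{LqLr-adj} for $1<q\le r\le 2$.

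The main obstacle I anticipate is precisely the starting point and convergence of this bootstrap. Since the hyperbolic drift leaves only the first energy relation usable—higher energy estimates being useless here, cf. Remark \ref{2nd-en}—each pass of the duality argument gains only a limited amount of integrability, so one must begin with $r_0$ near $2$ and track the decay exponents carefully enough that the Duhamel integral against $S(t,\cdot)$ stays convergent and the constants depend on the data only through the magnitude bound \eqref{mag}. Once both families are available on their respective triangles, exchanging $T$ and $T^*$ provides the reflected ranges, and splitting $t-s$ and composing across $q=2$ via \eqref{semi} delivers \eqref{LqLr} and \eqref{LqLr-adj} for the full range $1<q\le r<\infty$, completing the proof of Theorem \ref{main}.
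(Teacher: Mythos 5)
Your overall architecture does match the paper's: the cut-off/Bogovskii decomposition \eqref{decompo}, the dual Duhamel formulation \eqref{duhamel-1}, the use of the backward energy relation \eqref{energy-adj} together with the auxiliary Lemma \ref{auxi}, a bootstrap in $r_0$, and the final exchange of the roles of $T$ and $T^*$ plus the semigroup property. Your first pass (obtaining $r_0\in(2,3)$ from Cauchy--Schwarz and the energy identity) is exactly the paper's first step. The genuine gap is in the claim that reinserting the improved $L^q$-$L^2$ estimate of $T(t,s)^*$ ``enlarges the admissible $r_0$, and a convergent bootstrap exhausts $(2,\infty)$.'' The obstruction sits in the part of the Duhamel integral near the final time $t$. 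After the bound \eqref{J}, one must control $\int_{s+1}^{t-1}(\tau-s)^{-3/2r}\|\nabla T(t,\tau)^*\psi\|_2\,d\tau$; splitting at the midpoint as in \eqref{J-split}, the improved weak decay \eqref{weak-d1} indeed helps on $(s+1,(s+t)/2)$, but on $((s+t)/2,t-1)$ the weight is essentially the constant $(t-s)^{-3/2r}$, while the energy integral $\int_{(s+t)/2}^{t-1}\|\nabla T(t,\tau)^*\psi\|_2^2\,d\tau$ remains $O(1)$ no matter how good the weak $L^2$ decay becomes, because the integrand is $O(1)$ near $\tau=t-1$. Cauchy--Schwarz thus gives $O((t-s)^{1/2})$ for the $L^1$-in-time integral, hence a contribution $O((t-s)^{1/2-3/2r})$, bounded only when $r\le 3$: with energy plus Cauchy--Schwarz alone the bootstrap stalls at $r<3$, and no amount of improved weak decay fixes this half of the integral. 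The paper's way out is the dyadic device of Lemma \ref{growth-est}: the weak decay, applied at every dyadic scale through the energy relation, yields the square-integral decay \eqref{J-first} blockwise, and the lemma converts it into the $L^1$ growth estimate \eqref{J-second}, $\int_{(s+t)/2}^{t-1}\|\nabla T(t,\tau)^*\psi\|_2\,d\tau\le C_\varepsilon(t-s-2)^{1/4+\varepsilon}\|\psi\|_{r^\prime}$, which beats $(t-s)^{1/2}$ and is precisely what lets the range pass $r=3$ (to $r<6$, then to $[6,\infty)$ via \eqref{weak-d2}). This lemma, or an equivalent device for the near-final-time portion, is the key idea missing from your proposal.

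A second, smaller gap: you list the local smoothing of Proposition \ref{HR-thm} among your global-in-time inputs, but its constant depends on the absolute final time $\mathcal{T}$, whereas the argument repeatedly needs bounds such as $\|T(t,t-1)^*\psi\|_2\le C\|\psi\|_{r^\prime}$ with $C$ independent of $t\to\infty$, i.e., uniform in the difference $t-s$. This uniformity is not contained in \cite{HR14}; it is exactly what Proposition \ref{unif-loc} (proved through Lemma \ref{bdd-further} and the whole-space estimates) supplies, and without it the endpoint terms \eqref{top} and \eqref{near-end} are not uniformly controlled.
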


Theorem \ref{main} combined with \eqref{energy} or \eqref{energy-adj}
at once yields the following estimates
\eqref{grad-decay} for $f,\, g\in Y_2(D)\cap L^q_\sigma(D)$ with $q\in (1,2]$
and, therefore, for those in $L^q_\sigma(D)$ by an approximation procedure.
\begin{theorem}
Let $q\in (1,2]$.
Under the same conditions as in Theorem \ref{main},
there is a constant $C=C(m,q,\theta,D)>0$ such that
\begin{equation}
\begin{split}
\int_t^\infty\|\nabla T(\sigma,s)f\|_2^2\,d\sigma
&\leq C(t-s)^{-3/q+3/2}\|f\|_q^2,  \\
\int_0^s\|\nabla T(t,\sigma)^*g\|_2^2\,d\sigma
&\leq C(t-s)^{-3/q+3/2}\|g\|_q^2,
\end{split}
\label{grad-decay}
\end{equation}
for all $t>s\geq 0$ and $f,\,g\in L^q_\sigma(D)$
whenever \eqref{mag} is satisfied.
\label{int-decay}
\end{theorem}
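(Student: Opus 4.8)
The plan is to read off both bounds directly from the energy identities \eqref{energy} and \eqref{energy-adj} once they are combined with the $L^q$-$L^2$ decay supplied by Theorem \ref{main}; no new machinery is needed. I would first establish the two inequalities for data in the dense subclass $Y_2(D)\cap L^q_\sigma(D)$, for which the energy relations are available, and then remove this restriction by an approximation argument. Note that $C_{0,\sigma}^\infty(D)\subset Y_2(D)\cap L^q_\sigma(D)$, since a compactly supported smooth solenoidal field lies in $W_0^{1,2}(D)\cap W^{2,2}(D)$ and has $|x|\nabla u\in L^2(D)$; because $C_{0,\sigma}^\infty(D)$ is dense in $L^q_\sigma(D)$, this subclass is dense as well.

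For the forward estimate, fix $f\in Y_2(D)\cap L^q_\sigma(D)$ with $q\in(1,2]$. Applying \eqref{energy} with its two free times set to $t$ and $t'$, where $s\le t\le t'$, and discarding the nonnegative term $\frac12\|T(t',s)f\|_2^2$ gives
\[
\int_t^{t'}\|\nabla T(\sigma,s)f\|_2^2\,d\sigma\le\frac12\|T(t,s)f\|_2^2 ,
\]
and letting $t'\to\infty$ by monotone convergence yields $\int_t^\infty\|\nabla T(\sigma,s)f\|_2^2\,d\sigma\le\frac12\|T(t,s)f\|_2^2$. Now Theorem \ref{main} with $r=2$ gives $\|T(t,s)f\|_2\le C(t-s)^{-(3/q-3/2)/2}\|f\|_q$, and squaring produces exactly the exponent $-3/q+3/2$ in the first line of \eqref{grad-decay}.

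The adjoint estimate is entirely parallel. For $g\in Y_2(D)\cap L^q_\sigma(D)$ I would use Lemma \ref{dual-2} to write $T(t,\sigma)^*=S(t,\sigma)$ and then apply \eqref{energy-adj} with lower endpoint $0$ and upper endpoint $s$, which after discarding $\frac12\|S(t,0)g\|_2^2\ge0$ gives $\int_0^s\|\nabla S(t,\sigma)g\|_2^2\,d\sigma\le\frac12\|S(t,s)g\|_2^2$. Estimate \eqref{LqLr-adj} from Theorem \ref{main} with $r=2$ bounds $\|S(t,s)g\|_2=\|T(t,s)^*g\|_2$ by $C(t-s)^{-(3/q-3/2)/2}\|g\|_q$, and squaring yields the second line of \eqref{grad-decay}.

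The only genuine point to check is the passage from the dense subclass to all of $L^q_\sigma(D)$, and this is the step I expect to require the most care. Given $f\in L^q_\sigma(D)$, choose $f_k\in C_{0,\sigma}^\infty(D)$ with $f_k\to f$ in $L^q(D)$. The inequality just established shows $\{\nabla T(\cdot,s)f_k\}_k$ is bounded in $L^2((t,\infty);L^2(D))$ uniformly in $k$, so a subsequence converges weakly there to some $V$. For each fixed $\sigma>s$ the $L^q$-$L^2$ bound \eqref{LqLr} forces $T(\sigma,s)f_k\to T(\sigma,s)f$ in $L^2(D)$, so testing against $\nabla\varphi$ for $\varphi\in C_0^\infty((t,\infty)\times D)$ identifies $V=\nabla T(\cdot,s)f$ in the distributional sense. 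Weak lower semicontinuity of the $L^2$-norm then gives $\int_t^\infty\|\nabla T(\sigma,s)f\|_2^2\,d\sigma\le\liminf_k\int_t^\infty\|\nabla T(\sigma,s)f_k\|_2^2\,d\sigma\le C(t-s)^{-3/q+3/2}\|f\|_q^2$, and the same reasoning applied to $S(t,\cdot)$ handles the adjoint bound. This completes the plan.
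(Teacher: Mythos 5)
Your proposal is correct and follows exactly the route the paper takes: the paper proves \eqref{grad-decay} in one stroke by combining Theorem \ref{main} (with $r=2$) with the energy relations \eqref{energy} and \eqref{energy-adj} for data in $Y_2(D)\cap L^q_\sigma(D)$, then extends to all of $L^q_\sigma(D)$ by approximation. Your write-up merely supplies the details of the density and weak-convergence step that the paper leaves implicit, and those details are sound.
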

\begin{remark}
Let $1<q\leq r<\infty$.
Combining Theorem \ref{main} with Proposition \ref{unif-loc} below
gives the pointwise decay property with slow rate such as
\[
\|\nabla T(t,s)f\|_r
\leq C\|T(t-1,s)f\|_r
\leq C(t-s)^{-(3/q-3/r)/2}\|f\|_q
\]
for $t-s>2$, however, the sharp one
$(t-s)^{-\alpha}$ still remains open, where
$\alpha=\min\{(3/q-3/r)/2+1/2,\,3/2q\}$
in view of the result on the Stokes semigroup,
see \cite{MSol}, \cite{DKS} and \cite{Hi11}.
It should be noted that estimates \eqref{grad-decay} of the integral form
are comparable to the sharp pointwise decay property with $r=2$
and thus can be a substitution.
In this paper we employ \eqref{grad-decay} 
as well as Theorem \ref{main} to solve the Navier-Stokes system.
\label{grad-pointwise}
\end{remark}
\begin{remark}
It does not seem to be easy to deduce usuful higher energy estimates.
For instance, we have the second energy relation of the form
\begin{equation*}
\partial_t\|\nabla T(t,s)f\|_2^2  
+\|P\Delta T(t,s)f\|_2^2 
\leq C\big(|\omega(t)|+|\omega(t)|^2+|\eta(t)|^2\big)\|\nabla T(t,s)f\|_2^2
\end{equation*}
which is essentially due to Galdi and Silvestre \cite{GaS05}, however,
this is not enough to find decay estimates under \eqref{body}.
Even for the Oseen semigroup we had not known the decay estimate
$\|\nabla u(t)\|_2\leq C(t-s)^{-1/2}\|f\|_2$
soley by the energy method until Kobayashi and Shibata \cite{KS}
succeeded in spectral analysis to obtain $L^q$-estimate
$\|\nabla u(t)\|_q\leq C(t-s)^{-1/2}\|f\|_q$ for every $q\in (1,3]$,
where $u(t)$ denotes the solution to \eqref{linearized}
with constant $\eta\neq 0$ and $\omega=0$.
Toward analysis of the non-autonomous system under consideration,
it would be worth while trying to provide 
another proof of the gradient estimate above
without relying on spectral analysis, and one could start with
the autonomous Oseen system in the half-space $\mathbb R^3_+$,
in which the tangential derivative $\partial_{x_k} u(t)$ for
$k=1,2$ possesses the same dissipative structure as in the first energy.
\label{2nd-en}
\end{remark}

\section{Preliminaries}
\label{pre}

{\em 3.1. Uniform estimate in $t-s$}

We assume that the translational and angular velocities
satisfy \eqref{body}.
We then deduce more about the constant $C>0$ in \eqref{LqLr}
near $t=s$ than shown by Hansel and Rhandi \cite{HR14}.
They took a constant $C=C({\cal T})$ uniformly in $(t,s)$
with $0\leq s<t\leq {\cal T}$,
but it is not clear whether it can be taken uniformly in the difference
$t-s$.
For the proof of Theorem \ref{main}
we need this information, which is the issue of the following proposition.
\begin{proposition}
Suppose that $\eta$ and $\omega$ fulfill \eqref{body} for some $\theta\in (0,1)$.
Let $1<q\leq r<\infty$.
For each $\tau_*\in (0,\infty)$ and $m\in (0,\infty)$,
there is a constant $C=C(\tau_*,m,q,r,\theta,D)>0$ such that \eqref{LqLr}
and \eqref{LqLr-grad} hold for all $(t,s)$ with
\[
t-s\leq\tau_* \quad\mbox{as well as $0\leq s< t$}
\]
and $f\in L^q_\sigma(D)$ whenever \eqref{mag} is satisfied.
So does the same thing concerning estimate \eqref{LqLr-adj}
for $T(t,s)^*=S(t,s)$.
\label{unif-loc}
\end{proposition}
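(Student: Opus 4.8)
The plan is to reduce the statement to the already established near-diagonal estimates of \cite{HR14} (our Proposition \ref{HR-thm}) by exploiting the time-translation covariance of the evolution operator. The point is that the global assumption \eqref{body} together with \eqref{mag} degrades, after a shift of the time origin, into \emph{local} bounds on a window of fixed length $\tau_*$, and these local bounds are precisely the quantities controlling the constant in Proposition \ref{HR-thm}.

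First I would fix $s\ge 0$ and introduce the shifted velocities $\tilde\eta(\tau):=\eta(s+\tau)$ and $\tilde\omega(\tau):=\omega(s+\tau)$ for $\tau\in[0,\tau_*]$. Since the exterior domain $D$ does not depend on time, the operator in \eqref{generator} depends on $t$ only through the values $\eta(t),\omega(t)$, so the family $\{\tilde L_+(\tau)\}$ generated by $\tilde\eta,\tilde\omega$ coincides with $\{L_+(s+\tau)\}$, and I would record the covariance identity
\[
\tilde T(\tau,\sigma)=T(s+\tau,s+\sigma),\qquad \tau\ge\sigma\ge 0,
\]
where $\tilde T$ is the evolution operator associated with $\{\tilde L_+(\tau)\}$. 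To verify it I would note that, for $f\in Y_q(D)$, Proposition \ref{HR-thm}(2) and the chain rule show that $u(\tau):=T(s+\tau,s+\sigma)f$ satisfies $\partial_\tau u(\tau)+\tilde L_+(\tau)u(\tau)=0$ with $u(\sigma)=f$; the identity then follows from uniqueness of the evolution operator attached to a prescribed generating family (available within the framework of \cite{HR14}, and in the $L^2$ setting immediate from the energy equality \eqref{energy}), together with the density of $Y_q(D)$ in $L^q_\sigma(D)$. This covariance step is the only delicate point: it is what detaches the constant from the absolute time and ties its size solely to the window length $\tau_*$.

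Next I would compare the shifted data with the global bounds. Directly from the definitions of the seminorms and \eqref{body} one has $|(\tilde\eta,\tilde\omega)|_{0,\tau_*}\le|(\eta,\omega)|_0$ and $|(\tilde\eta,\tilde\omega)|_{\theta,\tau_*}\le|(\eta,\omega)|_\theta$, so that \eqref{mag} gives $|(\tilde\eta,\tilde\omega)|_{0,\tau_*}+|(\tilde\eta,\tilde\omega)|_{\theta,\tau_*}\le m$, uniformly in $s$. Applying Proposition \ref{HR-thm} to the shifted problem with ${\cal T}=\tau_*$ therefore produces a constant $C=C(\tau_*,m,q,r,\theta,D)$, independent of $s$, for which \eqref{LqLr} and \eqref{LqLr-grad} hold with $T(t,s)$ replaced by $\tilde T(\tau,0)$ for $0<\tau\le\tau_*$. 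Taking $\tau=t-s$ and invoking $\tilde T(t-s,0)=T(t,s)$ yields \eqref{LqLr} and \eqref{LqLr-grad} for all $0\le s<t$ with $t-s\le\tau_*$.

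It remains to treat the adjoint, for which I would simply dualize. The estimate \eqref{LqLr} just obtained holds for \emph{every} exponent pair $1<q\le r<\infty$; applying it to the pair $(r',q')$ and passing to the Banach-space adjoint, while using $T(t,s)^*=S(t,s)$ from Lemma \ref{dual-2} and the invariance of the decay exponent $(3/r'-3/q')/2=(3/q-3/r)/2$, converts this into \eqref{LqLr-adj} for all $1<q\le r<\infty$ with $t-s\le\tau_*$. Alternatively one repeats verbatim the shift argument for $S$, using the near-final-time smoothing action of $S(t,s)$ recorded in Section \ref{evolu}. In either route the substance of the proof is the covariance identity; everything else is checking that \eqref{body} degrades correctly to local bounds on windows of length $\tau_*$.
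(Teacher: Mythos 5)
Your argument is correct as a matter of logic within the paper's framework, but it is genuinely different from the paper's own proof, and it is worth understanding exactly where the difference lies. The paper does not use any covariance argument: it re-opens the construction of \cite{HR14}, reducing Proposition \ref{unif-loc} to (i) the whole-space estimates \eqref{LqLr-wh}, whose constants are independent of $\eta,\omega$ altogether, and (ii) a new technical result, Lemma \ref{bdd-further}, which establishes the smoothing estimates and a pressure estimate for the evolution operator of the interior problem with constants depending only on $(\tau_*,m,q,r,\theta,D_R)$ --- proved via resolvent estimates uniform in $m$, the H\"older bound \eqref{hoelder}, and Tanabe's parametrix --- these two inputs being combined through the cut-off scheme and the iterated-convolution lemma of \cite{HR11}, \cite{HR14}. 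Your route replaces all of this by the time-shift identity $\tilde T(\tau,\sigma)=T(s+\tau,s+\sigma)$ (which is sound: the generator depends on $t$ only through $(\eta(t),\omega(t))$, the domain $D$ is time-independent, and uniqueness for the shifted initial value problem follows from \eqref{evo-2} or, in $L^2$, from \eqref{coercive}), plus the observation that \eqref{mag} dominates the windowed seminorms uniformly in $s$. What your approach buys is brevity and transparency; what it costs is that its entire weight rests on one reading of Proposition \ref{HR-thm}: that the constant there depends on the pair $(\eta,\omega)$ \emph{only} through the bound $m$, so that a single constant serves the whole family of shifted pairs $\{(\eta(s+\cdot),\omega(s+\cdot))\}_{s\geq 0}$. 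As the proposition is stated in this paper, with $C=C({\cal T},m,q,r,\theta,D)$, that reading is legitimate and your proof is complete; but the paper's own cautionary remark at the start of Section \ref{pre} (``they took a constant $C=C({\cal T})$ \dots it is not clear whether it can be taken uniformly in the difference $t-s$'') signals that the author did not regard the quantitative dependence of the \cite{HR14} constant as a safe black box, and the substance of the paper's Section \ref{pre} is precisely to \emph{verify}, from the construction, the kind of coefficient-uniform control that your argument \emph{cites}. In short: your proof is a clean soft reduction conditional on the literal statement of Proposition \ref{HR-thm}, while the paper's proof is the self-contained verification that makes such a statement honest.
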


The latter assertion for the adjoint follows from Lemma \ref{dual-2}
and the former one, which it suffices to show.
To this end, we have to enter into the details to some extent
about the construction of the evolution operator due to \cite{HR14}.
Basically the idea is to make full use of the associated evolution operator
in the whole space $\mathbb R^3$ and the one in a bounded domain
near the boundary $\partial D$.
Both are then combined well by a cut-off technique with the aid of
Lemma \ref{bogov} below.
This approach was more or less adopted in almost all literature on the
exterior problem with moving obstacles although difficulties
in each context were overcome in his/her own device of each author.
The idea of \cite{HR14} by Hansel and Rhandi is to employ a lemma
(\cite[Lemma 3.3]{HR11}, \cite[Lemma 5.2]{HR14}, see also \cite[Lemma 4.6]{GHH})
on estimate of iterated convolution.
From its proof one can see how the constant of this estimate is determined.
It then turns out that Proposition \ref{unif-loc} follows from
\eqref{LqLr-wh} and Lemma \ref{bdd-further} below
for the evolution operator in the whole space and the one in a
bounded domain near $\partial D$, respectively.
\bigskip

\noindent
{\em 3.2. Whole space problem}

Let us begin with the non-autonomous system
\begin{equation}
\begin{split}
&\partial_tu=\Delta u+(\eta(t)+\omega(t)\times x)\cdot\nabla u-\omega(t)\times u
-\nabla p,  \\
&\mbox{div $u$}=0,
\end{split}
\label{linear-wh}
\end{equation}
in $\mathbb R^3\times [s,\infty)$ subject to
\begin{equation}
u\to 0 \quad\mbox{as $|x|\to\infty$}, \qquad
u(\cdot,s)=f,
\label{wh-sub}
\end{equation}
where $f\in L^q_\sigma(\mathbb R^3)$.
This was well studied first by Chen and Miyakawa \cite{CMi}
in a specific situation and, later on, by Geissert and Hansel \cite{GHa},
\cite{Ha} in a very general situation.
Since
\begin{equation}
\mbox{div $[(\eta+\omega\times x)\cdot\nabla u-\omega\times u]$}
=(\eta+\omega\times x)\cdot\nabla\mbox{div $u$}=0,
\label{sole}
\end{equation}
one may conclude $\nabla p=0$ within the class
$\nabla p\in L^q(\mathbb R^3)$.
Hence, the solution formula is obtained from the heat semigroup
\[
e^{t\Delta}f=(4\pi t)^{-3/2}e^{-|\cdot|^2/4t}*f
\]
simply by transformation of variables as follows, where $*$ stands for
convolution in spatial variable.
For every $y\in\mathbb R^3$, a unique solution to the initial
value problem
\[
\frac{d}{dt}\varphi(t)=-\omega(t)\times \varphi(t), \qquad \varphi(0)=y,
\]
is given by 
$\varphi(t)=Q(t)y$ in terms of an orthogonal matrix $Q(t)$ with $Q(0)=\mathbb I$
($3\times 3$ identity matrix).
Set
$\Phi(t,s)=Q(t)Q(s)^\top$, which is the evolution operator for the
ordinary differential equation above.
Under a suitable condition on $f$,
the solution to \eqref{linear-wh} is then explicitly described as
\begin{equation}
\begin{split}
u(x,t)&=\big(U(t,s)f\big)(x) \\
&:=\Phi(t,s)\left(e^{(t-s)\Delta}f\right)
\left(\Phi(t,s)^\top\Big(x+\int_s^t\Phi(t,\tau)\eta(\tau)d\tau\Big)\right) \\
&=\int_{\mathbb R^3}\Gamma(x,y; t,s)f(y)\,dy,
\end{split}
\label{formula-wh}
\end{equation}
where the kernel matrix is given by
\begin{equation*}
\begin{split}
&\Gamma(x,y; t,s) \\
&=\big(4\pi(t-s)\big)^{-3/2}\exp
\left(\frac{-\left|\Phi(t,s)^\top\left(x+\int_s^t\Phi(t,\tau)\eta(\tau)d\tau\right)-y
\right|^2}{4(t-s)}
\right)\Phi(t,s).
\end{split}
\end{equation*}
See \cite{CMi}, \cite{GHa} and \cite{Ha} for details,
but the representation above is related to the transformation
(see \cite{Ga02}), by which one obtains \eqref{NS} in the frame
attached to the obstacle from the system in the inertial frame.
Note that $\mbox{div$\big(U(t,s)f\big)$}=0$
as long as $f$ fulfills the compatibility condition $\mbox{div $f$}=0$.
We also consider the adjoint operator of $U(t,s)$, which is of the form
\[
\big(U(t,s)^*g\big)(y)
=\int_{\mathbb R^3}\Gamma(x,y; t,s)^\top g(x)\,dx.
\]
Given $t\geq 0$ and a suitable solenoidal vector field $g$,
the function $v(s):=U(t,s)^*g$ together with the trivial
pressure gradient $\nabla p=0$ formally solves the backward system
\begin{equation}
\begin{split}
-\partial_sv &=\Delta v-(\eta(s)+\omega(s)\times x)\cdot\nabla v
+\omega(s)\times v+\nabla p,  \\
\mbox{div $v$}&=0,
\end{split}
\label{adj-wh}
\end{equation}
in $\mathbb R^3\times [0,t]$ subject to
\begin{equation}
v\to 0 \quad\mbox{as $|x|\to\infty$}, \qquad
v(\cdot,t)=g.
\label{adj-wh-sub}
\end{equation}

Due to \cite{CMi}, \cite{GHa}, \cite{Ha}, \cite{HR11} and \cite{HR14},
we have the following lemma.
$L^q$-$L^r$ estimates \eqref{LqLr-wh} follow from those of the heat semigroup.
The regularity \eqref{adj-reg-wh} of the adjoint
is verified in the same way as in \eqref{adj-reg};
indeed, the third statament below corresponds to
\eqref{adj-reg}--\eqref{evo-adj-1} for the exterior problem.
\begin{lemma}
Suppose that $\eta$ and $\omega$
fulfill \eqref{body-0} for some $\theta\in (0,1)$.
Let $1<q<\infty$.
Then $\{U(t,s)\}_{t\geq s\geq 0}$ defines an evolution operator on
$L^q(\mathbb R^3)$ and on $L^q_\sigma(\mathbb R^3)$.
Similarly, $\{U(t,s)^*\}_{t\geq s\geq 0}$ defines a backward
evolution operator (see \eqref{b-semi})
on those spaces for every $q\in (1,\infty)$.
Furthermore, we have the following properties.

\begin{enumerate}
\item
Let $q\leq r\leq\infty$.
For every integer $j\geq 0$,
there is a constant $C_j=C_j(q,r)>0$, independent of $\eta$ and $\omega$, such that
\begin{equation}
\begin{split}
\|\nabla^j U(t,s)f\|_{r,\mathbb R^3}
&\leq C_j(t-s)^{-(3/q-3/r)/2-j/2}\|f\|_{q,\mathbb R^3}, \\
\|\nabla^j U(t,s)^*g\|_{r,\mathbb R^3}
&\leq C_j(t-s)^{-(3/q-3/r)/2-j/2}\|g\|_{q,\mathbb R^3},
\end{split}
\label{LqLr-wh}
\end{equation}
for all $t>s\geq 0$ and $f,\, g\in L^q(\mathbb R^3)$.

\item
Set
\[
Y_q(\mathbb R^3)=\{u\in L^q_\sigma(\mathbb R^3)\cap W^{2,q}(\mathbb R^3);\,
|x|\nabla u\in L^q(\mathbb R^3)\}.
\]
We fix $s\geq 0$.
For every $f\in Y_q(\mathbb R^3)$ and $t\in [s,\infty)$, we have
$U(t,s)f\in Y_q(\mathbb R^3)$ and
\begin{equation}
u:=U(\cdot,s)f\in C^1([s,\infty); L^q_\sigma(\mathbb R^3)),
\label{evo-cl-wh}
\end{equation}
which satisfies \eqref{linear-wh}--\eqref{wh-sub} in $L^q_\sigma(\mathbb R^3)$.

\item
Fix $t\geq 0$.
For every $g\in Y_q(\mathbb R^3)$ and $s\in [0,t]$, we have
$U(t,s)^*g\in Y_q(\mathbb R^3)$ and
\begin{equation}
v:=U(t,\cdot)^*g\in C^1([0,t]; L^q_\sigma(\mathbb R^3)),
\label{adj-reg-wh}
\end{equation}
which satisfies \eqref{adj-wh}--\eqref{adj-wh-sub} in
$L^q_\sigma(\mathbb R^3)$.
\end{enumerate}
\label{wh-est}
\end{lemma}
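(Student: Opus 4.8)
The plan is to read every assertion off the explicit representation \eqref{formula-wh}, exploiting that $U(t,s)$ is nothing but the scalar heat semigroup conjugated by a rigid motion of $\mathbb{R}^3$. Write $b(t,s)=\int_s^t\Phi(t,\tau)\eta(\tau)\,d\tau$ and, for an orthogonal matrix $R$ and a vector $a$, let $(\mathcal{R}_{R,a}h)(x)=R\,h(R^\top(x+a))$ be the associated vector-valued rigid-motion operator, so that \eqref{formula-wh} reads $U(t,s)=\mathcal{R}_{\Phi(t,s),b(t,s)}\,e^{(t-s)\Delta}$. Each $\mathcal{R}_{R,a}$ is an isometry of $L^q(\mathbb{R}^3)$ for every $q\in[1,\infty]$ (the change of variables $y=R^\top(x+a)$ has unit Jacobian and $|Rh|=|h|$ since $R$ is orthogonal), and, crucially, the Laplacian is invariant under rigid motions, whence $\mathcal{R}_{R,a}\,e^{\tau\Delta}=e^{\tau\Delta}\,\mathcal{R}_{R,a}$.

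First I would establish the evolution-operator property. The matrices satisfy the cocycle identities $\Phi(t,\tau)\Phi(\tau,s)=\Phi(t,s)$ (immediate from $\Phi(t,s)=Q(t)Q(s)^\top$) and $b(t,s)=b(t,\tau)+\Phi(t,\tau)\,b(\tau,s)$ (split the integral at $\tau$ and factor $\Phi(t,\rho)=\Phi(t,\tau)\Phi(\tau,\rho)$). A direct computation of the composition $\mathcal{R}_{R_1,a_1}\mathcal{R}_{R_2,a_2}=\mathcal{R}_{R_1R_2,\,a_1+R_1a_2}$ then gives $\mathcal{R}_{\Phi(t,\tau),b(t,\tau)}\mathcal{R}_{\Phi(\tau,s),b(\tau,s)}=\mathcal{R}_{\Phi(t,s),b(t,s)}$. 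Combining this with the commutation relation above and the semigroup law $e^{(t-\tau)\Delta}e^{(\tau-s)\Delta}=e^{(t-s)\Delta}$ yields $U(t,\tau)U(\tau,s)=U(t,s)$ and $U(s,s)=I$, while \eqref{sole} guarantees invariance of $L^q_\sigma(\mathbb{R}^3)$. The backward evolution property of the adjoint follows by transposing: since $e^{\tau\Delta}$ is self-adjoint and $(\mathcal{R}_{R,a})^*=\mathcal{R}_{R^\top,-Ra}$ is again a rigid motion, $U(t,s)^*=\mathcal{R}_{\Phi(t,s)^\top,-\Phi(t,s)b(t,s)}\,e^{(t-s)\Delta}$ has the same structure, and $(U(t,\tau)U(\tau,s))^*=U(\tau,s)^*U(t,\tau)^*$ gives the backward law \eqref{b-semi}.

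Next I would derive \eqref{LqLr-wh}. Applying the chain rule to \eqref{formula-wh} shows that $\nabla^j U(t,s)f$ equals $\mathcal{R}_{\Phi(t,s),b(t,s)}$ acting on $\nabla^j e^{(t-s)\Delta}f$, contracted with $j$ copies of the orthogonal matrix $\Phi(t,s)^\top$ coming from the inner Jacobian; none of these rotations affect pointwise magnitudes. Since $\mathcal{R}_{R,a}$ is an $L^r$-isometry, the bound reduces at once to the standard heat-semigroup estimate $\|\nabla^j e^{\tau\Delta}f\|_{r,\mathbb{R}^3}\leq C_j\tau^{-(3/q-3/r)/2-j/2}\|f\|_{q,\mathbb{R}^3}$, with constants independent of $\eta,\omega$ as claimed; the adjoint estimate is identical.

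Finally, for the classical-solution statements I would differentiate \eqref{formula-wh} directly. As \eqref{body-0} makes $\omega$ continuous, the flow solving $\dot Q=-\omega\times Q$ is $C^1$, hence $\Phi$ and $b$ are $C^1$ in $(t,s)$ with $\partial_t\Phi(t,s)=-\omega(t)\times\Phi(t,s)$ and $\partial_t b(t,s)=\eta(t)-\omega(t)\times b(t,s)$ (using $\Phi(t,t)=\mathbb{I}$). Differentiating the representation in $t$ and using $\partial_t e^{(t-s)\Delta}=\Delta e^{(t-s)\Delta}$ produces exactly the four terms $\Delta u$, $(\omega\times x)\cdot\nabla u$, $\eta\cdot\nabla u$ and $-\omega\times u$ of \eqref{linear-wh}, the first from the heat part and the last three from the $t$-derivatives of $\Phi$ and $b$; the adjoint computation yields \eqref{adj-wh}--\eqref{adj-wh-sub} in the same manner, giving \eqref{evo-cl-wh} and \eqref{adj-reg-wh}. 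The one point that I expect to be genuinely delicate is the membership $U(t,s)f\in Y_q(\mathbb{R}^3)$, i.e. the weighted bound $|x|\nabla U(t,s)f\in L^q$: after the change of variables the weight $|x|$ becomes $|\Phi(t,s)y-b(t,s)|\leq|y|+|b(t,s)|$, so it suffices to control $|y|\nabla e^{(t-s)\Delta}f$ and $\nabla e^{(t-s)\Delta}f$ in $L^q$. The latter is clear, and the former I would handle by the commutator identity $x_k\,e^{\tau\Delta}h=e^{\tau\Delta}(x_k h)-2\tau\,\partial_k e^{\tau\Delta}h$ applied componentwise with $h=\nabla f$; both resulting terms lie in $L^q$ precisely because $f\in Y_q(\mathbb{R}^3)$ supplies the information $|x|\nabla f\in L^q$ together with $f\in W^{2,q}(\mathbb{R}^3)$. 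This weighted estimate, rather than the norm bounds or the PDE verification, is the only step requiring real care.
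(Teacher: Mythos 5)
Your proposal is correct and is essentially the same argument the paper relies on: the paper disposes of this lemma by citing \cite{CMi}, \cite{GHa}, \cite{Ha}, \cite{HR11} and \cite{HR14} and noting that \eqref{LqLr-wh} reduces to heat-semigroup estimates, and your factorization $U(t,s)=\mathcal{R}_{\Phi(t,s),b(t,s)}\,e^{(t-s)\Delta}$ into a rigid-motion isometry commuting with $e^{\tau\Delta}$ is precisely the mechanism behind that reduction, here carried out in full detail (cocycle identities, differentiation of the representation, and the weighted commutator bound $x_k e^{\tau\Delta}h=e^{\tau\Delta}(x_k h)-2\tau\,\partial_k e^{\tau\Delta}h$ for membership in $Y_q(\mathbb{R}^3)$, which you rightly single out as the only delicate point). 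One harmless slip: with your convention $(\mathcal{R}_{R,a}h)(x)=R\,h(R^\top(x+a))$, the adjoint is $(\mathcal{R}_{R,a})^*=\mathcal{R}_{R^\top,\,-R^\top a}$ rather than $\mathcal{R}_{R^\top,\,-Ra}$, so that $U(t,s)^*=\mathcal{R}_{\Phi(t,s)^\top,\,-\Phi(t,s)^\top b(t,s)}\,e^{(t-s)\Delta}$; since the only facts you actually use are that the adjoint is again a rigid-motion isometry commuting with the heat semigroup, nothing downstream is affected.
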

\medskip

\noindent
{\em 3.3. Interior problem}

We fix $R\in [R_0,\infty)$,
where $R_0$ is as in \eqref{cov},
and proceed to the non-autonomous system
\begin{equation}
\begin{split}
&\partial_tu=\Delta u+(\eta+\omega\times x)\cdot\nabla u
-\omega\times u-\nabla p,  \\
&\mbox{div $u$}=0, \\
\end{split}
\label{linear-bdd}
\end{equation}
in $D_R\times [s,\infty)$ subject to
\begin{equation}
u|_{\partial D_R}=0, \qquad
u(\cdot,s)=f.
\label{bdd-sub}
\end{equation}
Using the Fujita-Kato projection $P_{D_R}$
associated with the Helmholtz decomposition,
we define the Stokes operator
\[
D_q(A)=L^q_\sigma(D_R)\cap W^{1,q}_0(D_R)\cap W^{2,q}(D_R), \qquad
Au=-P_{D_R}\Delta u,
\]
and the operator
\[
D_q(L_R(t))=D_q(A), \qquad
L_R(t)=A+B(t),
\]
where the non-autonomous term
\begin{equation}
\begin{split}
B(t)u
&:=-P_{D_R}[(\eta(t)+\omega(t)\times x)\cdot\nabla u-\omega(t)\times u]  \\
&=-(\eta(t)+\omega(t)\times x)\cdot\nabla u+\omega(t)\times u
\end{split}
\label{perturb}
\end{equation}
is nothing but lower order perturbation from the Stokes operator
for the interior problem unlike the exterior problem.
The latter equality in \eqref{perturb}
holds for $u\in D_q(A)$ as shown in the proof of Lemma \ref{bdd-further} below.
For each $t\geq 0$, the operator $L_R(t)$
generates an analytic semigroup on $L^q_\sigma(D_R)$,
see the resolvent estimate \eqref{resol} below.
Under the condition \eqref{body-0} for some $\theta\in (0,1)$,
it is not difficult to apply the Tanabe-Sobolevskii theory to
the operator family
$\{L_R(t)\}_{t\geq 0}$.
It then turns out that this family generates the evolution
operator $\{V(t,s)\}_{t\geq s\geq 0}$ of parabolic type on
$L^q_\sigma(D_R)$;
indeed, this was the observation by \cite{HR14} (see also \cite{Hi01}).
In the present paper, further consideration 
under the condition \eqref{body} is needed.
\begin{lemma}
Suppose that $\eta$ and $\omega$ fulfill \eqref{body} for some $\theta\in (0,1)$.
Let $1<q\leq r<\infty$.
For each $\tau_*\in (0,\infty)$, $m\in (0,\infty)$ and $j=0,1$,
there are constants $C_j=C_j(\tau_*,m,q,r,\theta,D_R)>0$ and
$C_2=C_2(\tau_*,m,q,\theta,D_R)>0$ such that
\begin{equation}
\|\nabla^j V(t,s)f\|_{r,D_R}\leq C_j(t-s)^{-(3/q-3/r)/2-j/2}\|f\|_{q,D_R},
\label{LqLr-bdd}
\end{equation}
\begin{equation}
\|p(t)\|_{q,D_R}\leq C_2(t-s)^{-(1+1/q)/2}\|f\|_{q,D_R}
\label{press-est}
\end{equation}
for all $(t,s)$ with
$t-s\leq\tau_*$ as well as $0\leq s<t$ and 
$f\in L^q_\sigma(D_R)$ whenever \eqref{mag} is satisfied.
Here, $p(t)$ denotes the pressure to \eqref{linear-bdd}
associated with $u(t)=V(t,s)f$ and it is singled out subject to the side condition
$\int_{D_R}p(x,t)dx=0$.
\label{bdd-further}
\end{lemma}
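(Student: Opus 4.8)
The plan is to revisit the Tanabe--Sobolevskii construction of $\{V(t,s)\}$ from \cite{HR14}, tracking the dependence of every constant on the data, and to observe that under the \emph{global} hypotheses \eqref{body} the bounds are governed by $m$ and by the length $t-s\le\tau_*$ alone, never by the absolute time. The point is that $D_R$ is bounded, so $|x|\le R$ there and the non-autonomous part $B(t)$ in \eqref{perturb} is a genuine first-order perturbation of the Stokes operator $A$: one has $\|B(t)g\|_{q,D_R}\le C(m,R)(\|\nabla g\|_{q,D_R}+\|g\|_{q,D_R})$ and the H\"older modulus $\|B(\tau)-B(t)\|\le C(m,R)\,|\tau-t|^\theta$, both uniform in $t\ge0$ precisely because \eqref{mag} controls the $L^\infty$ and global H\"older norms at once.

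First I would justify the second equality in \eqref{perturb}, namely that $P_{D_R}$ acts as the identity on the lower-order term for $u\in D_q(A)$. By the computation \eqref{sole} this term is divergence free, and its normal trace on $\partial D_R$ vanishes: since $u|_{\partial D_R}=0$ forces $\nabla u=(\partial_\nu u)\otimes\nu$ on $\partial D_R$, one gets $(\eta+\omega\times x)\cdot\nabla u=\big((\eta+\omega\times x)\cdot\nu\big)\,\partial_\nu u$ there, whose normal component equals $\big((\eta+\omega\times x)\cdot\nu\big)(\nu\cdot\partial_\nu u)=\big((\eta+\omega\times x)\cdot\nu\big)\,\mathrm{div}\,u=0$, while $\omega\times u$ vanishes outright on $\partial D_R$. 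Hence the term lies in $L^q_\sigma(D_R)$, the projection is trivial, and $B(t)$ has the mapping property used above.

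Next, with $A$ and each $L_R(t)$ sectorial uniformly in $t$ (the resolvent estimate \eqref{resol}) and with the Stokes smoothing bounds $\|\nabla^j e^{-\sigma A}f\|_{r,D_R}\le C\sigma^{-(3/q-3/r)/2-j/2}\|f\|_{q,D_R}$ on the bounded domain, I would represent $V(t,s)=e^{-(t-s)L_R(s)}+\int_s^t e^{-(t-\tau)L_R(\tau)}R(\tau,s)\,d\tau$, where $R$ solves the Volterra equation $R=R_1+R_1*R$ with regularizing kernel $R_1(t,\tau)=[B(\tau)-B(t)]e^{-(t-\tau)L_R(\tau)}$. The decisive estimate $\|R_1(t,\tau)g\|_{q,D_R}\le Cm\,(t-\tau)^{\theta-1/2}\|g\|_{q,D_R}$ combines the H\"older modulus of $B$ with the first-order smoothing of the analytic semigroup; since $\theta-\tfrac12>-1$ its convolution powers stay integrable and the iterated convolution lemma \cite[Lemma 5.2]{HR14} sums the series, with a constant that---as one reads off from its proof---depends only on $m$ and $\tau_*$. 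Inserting this into the representation, using the Stokes smoothing for the leading term and, when the gap is large ($(3/q-3/r)/2\ge1$), chaining the semigroup property $V(t,s)=V(t,t_{N-1})\cdots V(t_1,s)$ through intermediate exponents so that every convolution remains integrable, yields \eqref{LqLr-bdd} for $j=0,1$ and all $1<q\le r<\infty$, uniformly in $t-s\le\tau_*$.

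For the pressure I would recover it from the momentum equation: since $\partial_tu=-L_R(t)u=P_{D_R}\Delta u+(\eta+\omega\times x)\cdot\nabla u-\omega\times u$, subtracting this from \eqref{linear-bdd} and cancelling the lower-order terms by the solenoidality just established gives $\nabla p=(I-P_{D_R})\Delta u$. The $L^q$-theory of the pressure on $D_R$ (the N\'ecas/Bogovski\u{\i} estimate $\|p\|_{q,D_R}\le C\|\nabla p\|_{W^{-1,q}(D_R)}$, see Lemma \ref{bogov} and \cite{SiS}) together with $\|(I-P_{D_R})\Delta u\|_{W^{-1,q}(D_R)}\le C\|\nabla u\|_{q,D_R}$ and the $j=1$, $r=q$ case of \eqref{LqLr-bdd} bounds $\|p(t)\|_{q,D_R}$ by $C(t-s)^{-1/2}\|f\|_{q,D_R}$; since $t-s\le\tau_*$ and $1/2\le(1+1/q)/2$, the surplus factor $(t-s)^{1/(2q)}$ is absorbed into the constant and \eqref{press-est} follows. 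I expect the main obstacle to be exactly this uniformity in $t-s$ rather than in the horizon $\mathcal T$: the estimates of \cite{HR14} carry $C=C(\mathcal T)$, and one must verify, by inspecting the constant in \cite[Lemma 5.2]{HR14}, that replacing the local norms $|(\eta,\omega)|_{0,\mathcal T},|(\eta,\omega)|_{\theta,\mathcal T}$ by the global ones bounded in \eqref{mag} makes every bound depend on $(\tau_*,m)$ only; secondary care is needed in the boundary-trace step above and in the exponent-chaining when $(3/q-3/r)/2\ge1$.
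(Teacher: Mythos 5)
Your treatment of \eqref{LqLr-bdd} is essentially the paper's own proof: uniform sectoriality of $k+L_R(t)$ via the smallness of $B(t)(\lambda+k+A)^{-1}$ (the paper's \eqref{resol}), the global H\"older bound on $B(\cdot)$ coming from \eqref{mag}, and the Tanabe parametrix whose kernel $R_1$ obeys the $(t-\tau)^{\theta-1/2}$ bound with constants depending only on $(m,\tau_*)$ — this is exactly \eqref{G-1}--\eqref{para}. Your endgame (plugging $L^q$--$L^r$ smoothing of the analytic semigroup into the Duhamel term, chaining exponents when $(3/q-3/r)/2\ge 1$) differs only cosmetically from the paper's, which instead derives the $W^{2,q}$ bound \eqref{2nd} from \eqref{basic} and the relative bound of $B(t)$ against $A$, and then applies Gagliardo--Nirenberg plus the semigroup property. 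Your pointwise boundary computation showing $\nu\cdot[(\eta+\omega\times x)\cdot\nabla u]=\big((\eta+\omega\times x)\cdot\nu\big)\,\mathrm{div}\,u=0$ on $\partial D_R$ is a valid alternative to the paper's density argument for \eqref{n-tra}, so the second equality in \eqref{perturb} is fine.

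The pressure estimate, however, contains a genuine gap, and it sits exactly where the exponent $(1+1/q)/2$ in \eqref{press-est} comes from. Your key inequality $\|(I-P_{D_R})\Delta u\|_{W^{-1,q}(D_R)}\leq C\|\nabla u\|_{q,D_R}$ is unjustified: although $\|\Delta u\|_{W^{-1,q}(D_R)}\leq\|\nabla u\|_{q,D_R}$ is true, you cannot then apply $I-P_{D_R}$, because the Helmholtz projection is an $L^q$ operator that does not act boundedly on $W^{-1,q}(D_R)$. Equivalently, its adjoint $P_{D_R,q^\prime}$ fails to map $W^{1,q^\prime}_0(D_R)$ into itself: for $w\in W^{1,q^\prime}_0(D_R)$ one has $P_{D_R,q^\prime}w=w-\nabla\pi_w$ with $\partial_\nu\pi_w|_{\partial D_R}=0$, which preserves only the vanishing of the \emph{normal} component of the trace, not of the full trace. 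If you try to verify your inequality by duality, $\langle (I-P_{D_R})\Delta u, w\rangle_{D_R}=\langle\Delta u,\nabla\pi_w\rangle_{D_R}$, and any integration by parts produces a boundary integral, e.g. $\int_{\partial D_R}(\partial_\nu u)\cdot\nabla\pi_w\,d\sigma$, which cannot be paid for by $\|\nabla u\|_{q,D_R}$ alone; a trace of $\nabla u$ (or of $\Delta u$) is unavoidable. This is precisely why the paper's proof, via the Neumann problem and \eqref{press-rela}, keeps the boundary term and estimates it by the trace inequality $\|\nabla u\|_{L^q(\partial D_R)}\leq C\|\nabla^2u\|_{q,D_R}^{1/q}\|\nabla u\|_{q,D_R}^{1-1/q}$, which forces the rate $(t-s)^{-(1+1/q)/2}$ and requires the $W^{2,q}$ bound \eqref{2nd}. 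Your claimed rate $(t-s)^{-1/2}$ is strictly stronger than the lemma asserts and, in view of this boundary obstruction, there is no reason to believe it; the repair is to run the paper's duality argument, which your setup supports once you also establish \eqref{2nd} (obtainable in your framework from \eqref{basic} together with $\|Au\|_{q,D_R}\leq C\|L_R(t)u\|_{q,D_R}+C(m+m^2)\|u\|_{q,D_R}$).
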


\begin{proof}
Set
$\Sigma=\{\lambda\in\mathbb C;\,|\arg\lambda|\leq 3\pi/4\}\cup\{0\}$.
We know (\cite{FaS}, \cite{G}, \cite{Sol}) that 
$\Sigma\subset \rho(-A)$ with
\[
\|\nabla^j (\lambda +A)^{-1}\|
\leq C(1+|\lambda|)^{-(2-j)/2}
\]
for all $\lambda\in\Sigma$ and $j=0,1$,
where we fix $q\in (1,\infty)$ and abbreviate
$\|\cdot\|=\|\cdot\|_{{\cal L}(L^q_\sigma(D_R))}$.
Let $k>0$, then by $|\lambda +k|\geq k/2^{1/2}$
we have the following uniform boundedness in
$\lambda\in\Sigma$ and $t\geq 0$:
\begin{equation*}
\begin{split}
\|B(t)(\lambda +k+A)^{-1}\|
&\leq C|(\eta,\omega)|_0\sum_{j=0}^1\|\nabla^j(\lambda +k+A)^{-1}\| \\
&\leq Cm\sum_{j=0}^1 (1+k)^{-(2-j)/2}.
\end{split}
\end{equation*}
We are thus able to take $k=k(m)>0$ large enough to obtain
\[
\|B(t)(\lambda +k+A)^{-1}\|\leq\frac{1}{2}
\]
for all $\lambda\in\Sigma$ and $t\geq 0$, which yields the existence
of the bounded inverse
\[
(\lambda +k+L_R(t))^{-1}
=(\lambda +k+A)^{-1}\big[1+B(t)(\lambda +k+A)^{-1}\big]^{-1}
\]
together with
\begin{equation}
\|\nabla^j(\lambda +k+L_R(t))^{-1}\|
\leq C(1+|\lambda +k|)^{-(2-j)/2}
\leq c_*(1+|\lambda|)^{-(2-j)/2}
\label{resol}
\end{equation}
for all $\lambda\in\Sigma$, $t\geq 0$ and $j=0,\,1$,
where the constant $c_*=c_*(m)$
depends on $m$ via $k=k(m)$.
This implies that
\begin{equation}
\begin{split}
&\quad \|(k+L_R(t))(k+L_R(\tau))^{-1}-(k+L_R(s))(k+L_R(\tau))^{-1}\|  \\
&=\|(B(t)-B(s))(k+L_R(\tau))^{-1}\|  \\
&\leq C|(\eta,\omega)|_\theta\,|t-s|^\theta
\sum_{j=0}^1\|\nabla^j(k+L_R(\tau))^{-1}\|  \\
&\leq Cc_*m\,|t-s|^\theta
\end{split}
\label{hoelder}
\end{equation}
for all $t,\, s,\, \tau\geq 0$ and that
\begin{equation}
\left\|\nabla^j e^{-(t-s)(k+L_R(s))}\right\|\leq Cc_*(t-s)^{-j/2}
\label{1st-app}
\end{equation}
for all $t>s\geq 0$ and $j=0,\,1$.
Set
\begin{equation}
G_1(t,s)=-\{(k+L_R(t))-(k+L_R(s))\}e^{-(t-s)(k+L_R(s))},
\label{G-1}
\end{equation}
then \eqref{1st-app} leads to
\begin{equation}
\begin{split}
\|G_1(t,s)\|
&=\left\|(B(t)-B(s))e^{-(t-s)(k+L_R(s))}\right\|  \\
&\leq Cc_*m\big(1+\tau_*^{1/2}\big)(t-s)^{\theta-1/2}
\end{split}
\label{para}
\end{equation}
for all $t>s\geq 0$ with $t-s\leq\tau_*$.
By \eqref{resol} and \eqref{hoelder} one can provide a parametrix
of the evolution operator $V(t,s)$ along the procedure due to
Tanabe, in which the remainder part
\[
W(t,s):=e^{-k(t-s)}V(t,s)-e^{-(t-s)(k+L_R(s))}
\]
is constructed in the form
\[
W(t,s)=\int_s^t e^{-(t-\tau)(k+L_R(\tau))}G(\tau,s)\,d\tau
\]
by means of iteration
\[
G(t,s)=\sum_{j=1}^\infty G_j(t,s), \qquad
G_j(t,s)=\int_s^t G_1(t,\tau)G_{j-1}(\tau,s)\,d\tau \quad (j\geq 2)
\]
starting from $G_1(t,s)$ given by \eqref{G-1},
see \cite[Chapter 5, Section 2]{T} for details.
It follows from
\eqref{1st-app} and \eqref{para} 
(together with the H\"older estimate of
$G_1(t,s)-G_1(\tau,s)$ for $t>\tau >s\geq 0$) that
\[
\left\|e^{-k(t-s)}V(t,s)\right\|
+(t-s)\left\|\partial_t\big\{e^{-k(t-s)}V(t,s)\big\}\right\| \leq c_0
\]
with some constant $c_0=c_0(\tau_*,m,q,\theta,D_R)>0$ and thereby
\begin{equation}
\|V(t,s)\|+(t-s)\|\partial_t V(t,s)\|
\leq c_0(1+k\tau_*)e^{k\tau_*}
\label{basic}
\end{equation}
for all $t>s\geq 0$ with $t-s\leq\tau_*$.
Since
\[
\|B(t)u\|_{q,D_R}
\leq\frac{1}{2}\|Au\|_{q,D_R}+C(m+m^2)\|u\|_{q,D_R},
\]
we have
\[
\|u\|_{W^{2,q}(D_R)}
\leq C\|Au\|_{q,D_R}
\leq C\|L_R(t)u\|_{q,D_R}
+C(m+m^2)\|u\|_{q,D_R}
\]
for $t\geq 0$ and $u\in D_q(A)$,
in which we set $u=V(t,s)f$ and use \eqref{basic} to obtain
\begin{equation}
\|V(t,s)f\|_{W^{2,q}(D_R)}
\leq C(t-s)^{-1}\|f\|_{q,D_R}
\label{2nd}
\end{equation}
with some constant $C=C(\tau_*,m,q,\theta,D_R)>0$
for all $t>s\geq 0$ with $t-s\leq\tau_*$.
The Gagliardo-Nirenberg inequality and the semigroup property thus imply
\eqref{LqLr-bdd}.

Let us consider the estimate of the associated pressure by following
the idea of \cite[Section 3]{HiS}, \cite[Section 4]{HR14}.
To this end, we first verify the latter equality of \eqref{perturb}
for $u\in D_q(A)$.
On account of \eqref{sole}, it suffices to show that the normal trace
$\nu\cdot(\partial_iu)$ vanishes on the boundary 
$\partial D_R$ for each
$i\in\{1,2,3\}$, where $\nu$ stands for the outer unit normal to
$\partial D_R$.
Because $C_{0,\sigma}^\infty(D_R)$ is dense in the space
$\{u\in W^{1,q}_0(D_R); \mbox{div $u$}=0\}$,
we take $\phi_k\in C_{0,\sigma}^\infty(D_R)$ ($k=1,2,...$) satisfying
$\|\phi_k-u\|_{W^{1,q}(D_R)}\to 0$ as $k\to\infty$.
Since $\partial_i\phi_k\in C_{0,\sigma}^\infty(D_R)$,
we conclude that $\partial_i u\in L^q_\sigma(D_R)$ and, hence,
\begin{equation}
\nu\cdot (\partial_iu)|_{\partial D_R}=0, \qquad (i=1,2,3)
\label{n-tra}
\end{equation}
for every $u\in D_q(A)$.
Let $\varphi\in C_0^\infty(D_R)$ and let $\psi$ be
a solution to the Neumann problem
\[
\Delta\psi=\varphi-\frac{1}{|D_R|}\int_{D_R}\varphi(y)dy \quad\mbox{in $D_R$},
\qquad \partial_\nu\psi|_{\partial D_R}=0.
\]
By \eqref{linear-bdd}, \eqref{bdd-sub} and \eqref{n-tra} we observe
\begin{equation}
\langle \nabla p, \nabla\psi\rangle_{D_R}
=\langle\Delta u, \nabla\psi\rangle_{D_R},
\label{press-rela}
\end{equation}
where $u=V(t,s)f$.
Since $p$ is chosen so that
$\int_{D_R}p(x,t)dx=0$, we deduce from \eqref{press-rela} that
\begin{equation*}
\begin{split}
\langle p, \varphi\rangle_{D_R}
&=\langle p, \Delta\psi\rangle_{D_R}
=-\langle\Delta u, \nabla\psi\rangle_{D_R}  \\
&=\langle\nabla u, \nabla^2\psi\rangle_{D_R}
-\int_{\partial D_R}(\partial_\nu u)\cdot\nabla\psi\,d\sigma
\end{split}
\end{equation*}
for all $\varphi\in C_0^\infty(D_R)$.
We make use of the trace estimate together with
$\|\psi\|_{W^{2,q^\prime}(D_R)}\leq C\|\varphi\|_{q^\prime,D_R}$,
where $1/q^\prime+1/q=1$, to get
\[
\|p(t)\|_{q,D_R}
\leq C\|\nabla^2u(t)\|_{q,D_R}^{1/q}\|\nabla u(t)\|_{q,D_R}^{1-1/q}
+C\|\nabla u(t)\|_{q,D_R},
\]
which leads to \eqref{press-est} by virtue of
\eqref{LqLr-bdd} and \eqref{2nd}.
The proof is complete.
\end{proof}
\medskip

\noindent
{\em 3.4. Bogovskii operator}

Let $G\subset\mathbb R^n$ ($n\geq 2$)
be a bounded domain with Lipschitz boundary $\partial G$.
The boundary value problem
\[
\mbox{div $w$}=f \quad\mbox{in $G$}, \qquad
w|_{\partial G}=0,
\]
admits a lot of solutions as long as $f$ possesses an appropriate
regularity and satisfies the compatibility condition
$\int_G f(x)dx=0$.
Among them a particular solution found by Bogovskii \cite{B}
is convenient to recover the solenoidal
condition in cut-off procedures because of several fine properties of
his solution.
To be precise, we have the following lemma,
see \cite[Theorem 1]{B}, \cite[Theorem 2.4 (a)--(c)]{BS}, 
\cite[Theorem III.3.3]{Ga-b}, \cite[Theorem 2.5]{GHH-b}
and the references therein.
\begin{lemma}
Let $G\subset\mathbb R^n$, $n\geq 2$, be a bounded domain with
Lipschitz boundary.
There exists a linear operator
${\mathbb B}_G: C_0^\infty(G)\to C_0^\infty(G)^n$ with the following properties:
For every $q\in (1,\infty)$ and integer $k\geq 0$,
there is a constant $C=C(q,k,G)>0$ such that
\begin{equation}
\|\nabla^{k+1}{\mathbb B}_Gf\|_{q,G}\leq C\|\nabla^kf\|_{q,G}
\label{bog-1}
\end{equation}
and that
\begin{equation}
\mbox{div $({\mathbb B}_Gf)$}=f \qquad\mbox{if} \quad
\int_G f(x)\,dx=0,
\label{div-eq}
\end{equation}
where the constant $C$ is invariant under dilation of the domain $G$.
The operator ${\mathbb B}_G$ extends uniquely to a bounded operator
from $W^{k,q}_0(G)$ to $W^{k+1,q}_0(G)^n$ so that
\eqref{bog-1} and \eqref{div-eq} still hold true.
Furthermore, for every $q\in (1,\infty)$, it also extends uniquely to a bounded
operator from
$W^{1,q^\prime}(G)^*$ to $L^q(G)^n$, namely,
\begin{equation}
\|{\mathbb B}_Gf\|_{q,G}\leq C\|f\|_{W^{1,q^\prime}(G)^*}
\label{bog-2}
\end{equation}
with some constant $C=C(q,G)>0$, where
$1/q^\prime+1/q=1$.
\label{bogov}
\end{lemma}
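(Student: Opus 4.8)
The plan is to reduce the general Lipschitz domain to one star-shaped with respect to a ball, to construct ${\mathbb B}_G$ there via the explicit Bogovskii formula, and then to read off \eqref{bog-1}, \eqref{div-eq} and \eqref{bog-2} from Calder\'on--Zygmund theory together with a duality argument. First I would invoke the standard geometric fact that a bounded Lipschitz domain admits a finite covering $G=\bigcup_{i=1}^N G_i$ by subdomains each star-shaped with respect to an open ball $B_i\Subset G_i$. Granting the construction on a single star-shaped piece, one patches the local operators by a partition of unity subordinate to $\{G_i\}$, at each stage subtracting a suitable multiple of a fixed bump function so that the local right-hand sides inherit the vanishing-mean condition; this reduction preserves \eqref{bog-1}--\eqref{bog-2} and is routine, so I would concentrate on the star-shaped case.

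For $G$ star-shaped with respect to a ball $B\Subset G$, fix $\varpi\in C_0^\infty(B)$ with $\int_B\varpi\,dx=1$ and set, for $f\in C_0^\infty(G)$,
\[
({\mathbb B}_Gf)(x)=\int_G f(y)\,N(x,y)\,dy,\qquad
N(x,y)=\frac{x-y}{|x-y|^n}\int_{|x-y|}^\infty
\varpi\Big(y+\xi\,\tfrac{x-y}{|x-y|}\Big)\,\xi^{n-1}\,d\xi .
\]
Differentiating under the integral sign and passing to polar coordinates centered at $y$ gives $\mbox{div}_x\,({\mathbb B}_Gf)=f-\big(\int_G f\,dy\big)\varpi$, which yields \eqref{div-eq} once the compatibility condition holds; star-shapedness together with $\varpi\in C_0^\infty(B)$ ensures ${\mathbb B}_Gf\in C_0^\infty(G)^n$. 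A scaling argument in this formula shows that the constant in \eqref{bog-1} is invariant under dilation of $G$.

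The heart of the matter is \eqref{bog-1} for $k=0$. Computing $\nabla_x N(x,y)$ one finds a principal part homogeneous of degree $-n$ in $x-y$, carrying the mean-zero cancellation over spheres, plus a weakly singular remainder of order $|x-y|^{1-n}$. The former defines a Calder\'on--Zygmund operator bounded on $L^q(G)$ for $1<q<\infty$, while the latter is handled by Young's inequality; verifying the standard size, smoothness and cancellation conditions of the kernel is the \emph{main obstacle} and the genuinely technical step. The case $k\geq 1$ then reduces to $k=0$ by moving each derivative that falls on $f$ onto the kernel through integration by parts, producing an operator of the same class, and density of $C_0^\infty(G)$ extends ${\mathbb B}_G$ to $W^{k,q}_0(G)$ with \eqref{bog-1} and \eqref{div-eq} preserved.

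Finally, \eqref{bog-2} follows by duality. For $f\in C_0^\infty(G)$ and $g\in C_0^\infty(G)^n$ Fubini gives $\langle {\mathbb B}_Gf,\,g\rangle_G=\langle f,\,{\mathbb B}_G^*g\rangle_G$, the adjoint having the transposed kernel. Since ${\mathbb B}_G^*$ itself has a kernel of order $|x-y|^{1-n}$ it maps $L^{q^\prime}(G)$ into itself by Young's inequality, while its gradient is again of Calder\'on--Zygmund type, so ${\mathbb B}_G^*:L^{q^\prime}(G)\to W^{1,q^\prime}(G)$ is bounded, where $1/q^\prime+1/q=1$. Therefore
\[
\|{\mathbb B}_Gf\|_{q,G}
=\sup_{\|g\|_{q^\prime,G}\leq 1}\langle f,\,{\mathbb B}_G^*g\rangle_G
\leq \|f\|_{W^{1,q^\prime}(G)^*}\,
\sup_{\|g\|_{q^\prime,G}\leq 1}\|{\mathbb B}_G^*g\|_{W^{1,q^\prime}(G)}
\leq C\|f\|_{W^{1,q^\prime}(G)^*},
\]
and a density argument completes the proof of \eqref{bog-2}.
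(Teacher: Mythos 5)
Your proposal is correct in outline and follows essentially the same route as the paper, which does not prove Lemma \ref{bogov} itself but quotes it from \cite{B}, \cite{BS}, \cite{Ga-b} and \cite{GHH-b}: the reduction of a Lipschitz domain to finitely many domains star-shaped with respect to balls, the explicit Bogovskii kernel with the Calder\'on--Zygmund analysis yielding \eqref{div-eq} and \eqref{bog-1}, and the duality/adjoint argument for the negative-order estimate \eqref{bog-2} are precisely the arguments of those references. Be aware only that the genuinely technical points (verifying the size and cancellation conditions of the variable kernel for both $\nabla\mathbb{B}_G$ and $\nabla\mathbb{B}_G^*$, and arranging the patching with smooth cutoffs and bump functions supported in overlaps so that the localization remains bounded on $W^{1,q^\prime}(G)^*$ as well as on $W^{k,q}_0(G)$) are deferred rather than carried out in your sketch, exactly as the paper defers them to the cited literature.
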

\medskip

\noindent
{\em 3.5. A useful lemma}

We conclude this section with the following lemma
that is useful for both the proof of Theorem \ref{main} and analysis
of the Navier-Stokes flow.
It is not related to the evolution operator
and might be of independent interest.
The issue is how to obtain the optimal growth rate of the integral,
see \eqref{grow} below, for $t\to\infty$ from estimate of
the square integral.
\begin{lemma}
Fix $s\in\mathbb R$ and let $\alpha <1$.
Suppose that $z=z(\tau)$ is a real-valued function being in
$L^2_{loc}((s,\infty))$ and that
\begin{equation}
\int_{s+t}^{s+2t}z(\tau)^2\,d\tau
\leq Mt^{-\alpha}
\label{square}
\end{equation}
for all $t>0$ with some constant $M>0$.
Then we have
$z\in L^1_{loc}([s,\infty))$ with
\begin{equation}
\int_s^{s+t}|z(\tau)|\,d\tau\leq CM^{1/2}\,t^{(1-\alpha)/2}
\label{grow}
\end{equation}
for all $t>0$ with some constant $C=C(\alpha)>0$.
\label{growth-est}
\end{lemma}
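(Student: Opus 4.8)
The plan is to exploit the hypothesis \eqref{square}, which controls $z$ on each dyadic window $[s+t,s+2t]$, by first applying Cauchy-Schwarz on such a window and then summing a geometric-type series over a dyadic decomposition of the interval $[s,s+t]$. The reason a dyadic decomposition is natural is that \eqref{square} is scale-covariant in exactly the dyadic sense: doubling $t$ multiplies the right-hand side by $2^{-\alpha}$, and the condition $\alpha<1$ is precisely what makes the resulting geometric series converge when summed toward the singular endpoint $s$.

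\medskip
\noindent
First I would record the basic window estimate. On the interval $[s+t,s+2t]$, whose length is $t$, the Cauchy-Schwarz inequality gives
\[
\int_{s+t}^{s+2t}|z(\tau)|\,d\tau
\leq t^{1/2}\left(\int_{s+t}^{s+2t}z(\tau)^2\,d\tau\right)^{1/2}
\leq t^{1/2}\,(Mt^{-\alpha})^{1/2}
=M^{1/2}\,t^{(1-\alpha)/2}.
\]
This is the single-window version of the desired bound \eqref{grow}. The task is then to assemble such windows to cover the full interval $(s,s+t]$ down to the endpoint $s$.

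\medskip
\noindent
Next I would decompose $(s,s+t]$ dyadically. Writing $I_k=[s+2^{-k}t,\,s+2^{-k+1}t]$ for $k\geq 1$, these intervals are pairwise disjoint and their union is $(s,s+t]$. Each $I_k$ has the form $[s+t_k,s+2t_k]$ with $t_k=2^{-k}t$, so the window estimate above applies with $t$ replaced by $t_k$, giving
\[
\int_{I_k}|z(\tau)|\,d\tau\leq M^{1/2}\,t_k^{(1-\alpha)/2}
=M^{1/2}\,t^{(1-\alpha)/2}\,2^{-k(1-\alpha)/2}.
\]
Summing over $k\geq 1$ and using $\alpha<1$, so that the ratio $2^{-(1-\alpha)/2}<1$, the geometric series converges:
\[
\int_s^{s+t}|z(\tau)|\,d\tau
=\sum_{k=1}^\infty\int_{I_k}|z(\tau)|\,d\tau
\leq M^{1/2}\,t^{(1-\alpha)/2}\sum_{k=1}^\infty 2^{-k(1-\alpha)/2}
=CM^{1/2}\,t^{(1-\alpha)/2},
\]
with $C=C(\alpha)=\big(2^{(1-\alpha)/2}-1\big)^{-1}$, which is exactly \eqref{grow}. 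The interchange of sum and integral is justified by monotone convergence since the integrand is nonnegative, and the finiteness of the total integral yields $z\in L^1_{loc}([s,\infty))$ as claimed.

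\medskip
\noindent
\emph{The only delicate point} is the behavior at the left endpoint $\tau=s$: the hypothesis gives information only on $(s,\infty)$, and $z$ need not be integrable near $s$ a priori. The dyadic argument handles this cleanly because the windows $I_k$ accumulate at $s$ while their contributions decay geometrically, so no separate treatment of a neighborhood of $s$ is needed; the convergence of the series simultaneously establishes both the quantitative bound and the local integrability down to $s$. I expect no genuine obstacle beyond bookkeeping, the essential input being simply that $\alpha<1$ makes the geometric ratio strictly less than one.
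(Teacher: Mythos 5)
Your proof is correct and is essentially identical to the paper's own argument: the paper likewise applies the Schwarz inequality on the window $[s+t,s+2t]$ to get $\int_{s+t}^{s+2t}|z|\,d\tau\leq M^{1/2}t^{(1-\alpha)/2}$, then splits $(s,s+t)$ into the dyadic pieces $[s+t/2^{j+1},\,s+t/2^j]$ and sums the resulting geometric series, which converges precisely because $\alpha<1$. Your intervals $I_k$ coincide with the paper's (with $k=j+1$), and your explicit constant $C(\alpha)=\bigl(2^{(1-\alpha)/2}-1\bigr)^{-1}$ matches the value of the paper's series.
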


\begin{proof}
Although the proof is quite simple, we give it for readers' convenience
(since I do not find it in literature).
By \eqref{square} and the Schwarz inequality we have
\[
\int_{s+t}^{s+2t}|z(\tau)|\,d\tau\leq M^{1/2}\,t^{(1-\alpha)/2}
\]
for all $t>0$.
We split the interval $(s,s+t)$ dyadically and then utilize the estimate above
to find
\[
\int_s^{s+t}|z(\tau)|\,d\tau
=\sum_{j=0}^\infty\int_{s+t/2^{j+1}}^{s+t/2^j} |z(\tau)|\,d\tau
\leq M^{1/2}\,t^{(1-\alpha)/2}
\sum_{j=0}^\infty\Big(2^{(\alpha-1)/2}\Big)^{j+1}
\]
for all $t>0$, which yields \eqref{grow} on account of $\alpha<1$.
\end{proof}

\section{Proof of Theorem \ref{main}}
\label{proof}

In this section we will prove Theorem \ref{main}.
Let us fix $m>0$ and assume \eqref{mag}.
We first consider \eqref{LqLr} for $2\leq q\leq r<\infty$ simultaneously with
\eqref{LqLr-adj} for $1<q\leq r\leq 2$.
We fix a cut-off function $\phi\in C_0^\infty(B_{3R_0})$ such that
$\phi=1$ on $B_{2R_0}$, where $R_0$ is fixed as in \eqref{cov}.
Given $f\in C^\infty_{0,\sigma}(D)\subset C^\infty_{0,\sigma}(\mathbb R^3)$,
we take the solution $U(t,s)f$,
see \eqref{formula-wh},
to the whole space problem \eqref{linear-wh}
with the initial velocity $f$ at the initial time $s\geq 0$.
We may regard the solution $T(t,s)f$ for the exterior problem
\eqref{linearized} as a perturbation from
a modification of $U(t,s)f$;
to be precise, let us describe $T(t,s)f$ in the form
\begin{equation}
T(t,s)f=(1-\phi)U(t,s)f
+\mathbb B\left[\big(U(t,s)f\big)\cdot\nabla\phi\right]+v(t),
\label{decompo}
\end{equation}
where the perturbation is denoted by $v(t)=v(t;s)$ and
$\mathbb B:=\mathbb B_{A_{R_0}}$ is the Bogovskii operator
on the domain $A_{R_0}:=B_{3R_0}\setminus\overline{B_{R_0}}$
given by Lemma \ref{bogov}.
It is easily seen that $v(t)$ together with the pressure $p(t)$
associated with $T(t,s)f$ obeys
\begin{equation*}
\begin{split}
&\partial_tv=\Delta v+(\eta(t)+\omega(t)\times x)\cdot\nabla v-\omega(t)\times v
-\nabla p+F,  \\
&\mbox{div $v$}=0,
\end{split}
\end{equation*}
in $D\times [s,\infty)$ subject to
\begin{equation*}
\begin{split}
&v|_{\partial D}=0, \\
&v\to 0\quad\mbox{as $|x|\to\infty$}, \\
&v(\cdot,s)=\widetilde f:=\phi f-\mathbb B[f\cdot\nabla\phi],
\end{split}
\end{equation*}
where the forcing term $F$ is given by
\begin{equation*}
\begin{split}
F(x,t)
&=-2\nabla\phi\cdot\nabla U(t,s)f
-[\Delta\phi+(\eta(t)+\omega(t)\times x)\cdot\nabla\phi]\,U(t,s)f  \\
&\quad -\mathbb B\left[\partial_t\big(U(t,s)f\big)\cdot\nabla\phi\right]
+\Delta\mathbb B\left[\big(U(t,s)f\big)\cdot\nabla\phi\right]  \\
&\quad
+(\eta(t)+\omega(t)\times x)\cdot\nabla 
\mathbb B\left[\big(U(t,s)f\big)\cdot\nabla\phi\right]  \\
&\quad
-\omega(t)\times\mathbb B\left[\big(U(t,s)f\big)\cdot\nabla\phi\right],
\end{split}
\end{equation*}
which behaves like
\begin{equation}
\|F(t)\|_q \leq
\left\{
\begin{array}{ll}
C(m+1)(t-s)^{-1/2}\|f\|_q, &0<t-s<1, \\
C(m+1)(t-s)^{-3/2q}\|f\|_q, \qquad &t-s\geq 1,
\end{array}
\right.
\label{force}
\end{equation}
for $1<q<\infty$.
One can verify \eqref{force} by virtue of \eqref{LqLr-wh} 
and \eqref{bog-1}--\eqref{bog-2}
together with the first equation (with $\nabla p=0$) of \eqref{linear-wh}.
In fact, the only term in which one needs \eqref{bog-2} is
\begin{equation*}
\begin{split}
\left\|\mathbb B
\left[\partial_t\big(U(t,s)f\big)\cdot\nabla\phi\right]\right\|_{q,A_{R_0}}
&\leq C\|\partial_t\big(U(t,s)f\big)\cdot\nabla\phi\|_{W^{1,q^\prime}(A_{R_0})^*} \\
&\leq C\|\nabla U(t,s)f\|_{q,A_{R_0}}+Cm\|U(t,s)f\|_{q,A_{R_0}}
\end{split}
\end{equation*}
and the other terms are harmless by using solely \eqref{bog-1}.

In view of \eqref{decompo} together with
\eqref{evo-cl}, \eqref{evo-cl-wh} and Lemma \ref{bogov},
we deduce from
$f\in C^\infty_{0,\sigma}(D)$ that
\[
v\in C^1([s,\infty); L^q_\sigma(D))
\]
as well as $v(t)\in Y_q(D)$ for every $q\in (1,\infty)$.
We can thus employ \eqref{evo-2} to compute
$\partial_\tau\{T(t,\tau)v(\tau)\}$, so that we are led to the Duhamel formula
\[
v(t)=T(t,s)\widetilde f +\int_s^tT(t,\tau)PF(\tau)\,d\tau
\]
in $L^q_\sigma(D)$.
It is convenient to consider the duality formulation
\begin{equation}
\langle v(t), \psi\rangle_D
=\langle\widetilde f, T(t,s)^*\psi\rangle_D
+\int_s^t\langle F(\tau), T(t,\tau)^*\psi\rangle_D\,d\tau
\label{duhamel-1}
\end{equation}
for $\psi\in C^\infty_{0,\sigma}(D)$.

Let $r\in (2,\infty)$.
We intend to prove the boundedness uniformly in large $t-s$, that is,
\begin{equation}
\|v(t)\|_r\leq C\|f\|_r \qquad\mbox{for $t-s>3$},
\label{unif-bdd}
\end{equation}
with some constant $C=C(m,r,\theta,D)>0$ independent of such $(t,s)$,
where $m$ is as in \eqref{mag}.
Once we have that, we can conclude the following decay properties.
\begin{lemma}
In addition to the conditions in Theorem \ref{main},
suppose that, with some $r_0\in (2,\infty)$, estimate
\eqref{unif-bdd} holds for all $f\in C^\infty_{0,\sigma}(D)$.

\begin{enumerate}
\item
Let $2\leq q\leq r\leq r_0$.
Then there is a constant $C=C(m,q,r,r_0,\theta,D)>0$ such that \eqref{LqLr}
holds for all $t>s\geq 0$ and $f\in L^q_\sigma(D)$.

\item
Let $r_0^\prime\leq q\leq r\leq 2$, where
$1/r_0^\prime +1/r_0=1$.
Then there is a constant $C=C(m,q,r,r_0,\theta,D)>0$ such that \eqref{LqLr-adj} 
holds for all $t>s\geq 0$ and $g\in L^q_\sigma(D)$.
\end{enumerate}
\label{auxi}
\end{lemma}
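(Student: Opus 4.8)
The plan is to exploit the hypothesis \eqref{unif-bdd} to promote the $L^{r_0}$-boundedness of the perturbation $v(t)$ into $L^{r_0}$-boundedness of the full operator $T(t,s)$, then pass to the adjoint via Lemma \ref{dual-2}, and finally convert mere boundedness into genuine decay using the energy relation \eqref{energy-adj}. \emph{Step 1 (global $L^{r_0}$-boundedness of $T(t,s)$).} In the decomposition \eqref{decompo} the first two terms are estimated by the whole-space bound \eqref{LqLr-wh} with $q=r=r_0$ and by \eqref{bog-1} (recall $\mathbb B[\,\cdot\,]$ is supported in the bounded set $A_{R_0}$); both are $\le C\|f\|_{r_0}$. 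The third term is $v(t)$, for which \eqref{unif-bdd} gives $\|v(t)\|_{r_0}\le C\|f\|_{r_0}$ when $t-s>3$, while for $0<t-s\le 3$ I would invoke Proposition \ref{unif-loc} with $\tau_*=3$. Combining these and using density of $C^\infty_{0,\sigma}(D)$ yields $\|T(t,s)f\|_{r_0}\le C\|f\|_{r_0}$ for all $t>s\ge 0$. By Lemma \ref{dual-2} this passes to $\|S(t,s)g\|_{r_0'}\le C\|g\|_{r_0'}$ for all $t>s\ge 0$, and \eqref{energy-adj} with $\tau=t$ gives in addition $\|S(t,s)g\|_2\le\|g\|_2$.

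The heart of the argument, and the step I expect to be the main obstacle, is turning these uniform bounds into the corner decay estimate $\|S(t,s)g\|_2\le C(t-s)^{-(3/r_0'-3/2)/2}\|g\|_{r_0'}$. Fix $t$ and set $y(\sigma)=\|S(t,\sigma)g\|_2^2$. The differential energy identity \eqref{energy-adj-diff} gives $y'(\sigma)=2\|\nabla S(t,\sigma)g\|_2^2\ge 0$, so $y$ is nondecreasing and $y(s)\le y(t)=\|g\|_2^2$. Applying the Gagliardo--Nirenberg inequality in $\mathbb R^3$ to the zero-extension of $S(t,\sigma)g\in W^{1,2}_0(D)$,
\[
\|S(t,\sigma)g\|_2\le C\|\nabla S(t,\sigma)g\|_2^{a}\,\|S(t,\sigma)g\|_{r_0'}^{1-a},
\qquad a=\frac{3/r_0'-3/2}{3/r_0'-1/2}\in(0,1),
\]
and inserting the bound $\|S(t,\sigma)g\|_{r_0'}\le C\|g\|_{r_0'}$ from Step 1, I obtain the autonomous differential inequality $y'\ge c\,\|g\|_{r_0'}^{-2(1-a)/a}\,y^{1/a}$ with exponent $1/a>1$. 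Integrating this Bernoulli-type inequality from $s$ to $t$, using $y(s)\le y(t)$ and discarding the nonnegative endpoint contribution $y(t)^{1-1/a}$, gives
\[
\|S(t,s)g\|_2\le C(t-s)^{-a/\{2(1-a)\}}\|g\|_{r_0'}=C(t-s)^{-(3/r_0'-3/2)/2}\|g\|_{r_0'},
\]
which is \eqref{LqLr-adj} at the corner $q=r_0'$, $r=2$. It is precisely here that the global lower-integrability control supplied by duality is indispensable: without it the energy method alone cannot feed the nonlinear ODE.

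\emph{Step 4 (fill the triangle, then dualize).} The three estimates now available — the corner $L^{r_0'}$-$L^2$ bound just proved, together with the diagonal bounds $\|S(t,s)g\|_{r_0'}\le C\|g\|_{r_0'}$ and $\|S(t,s)g\|_2\le C\|g\|_2$ — sit at the three vertices of the triangle $\{1/2\le 1/r\le 1/q\le 1/r_0'\}$, and their decay exponents agree with the affine function $(3/q-3/r)/2$. Since this function is affine and exact at the vertices, two successive applications of the Riesz--Thorin theorem fill the whole triangle with the correct rate, establishing \eqref{LqLr-adj} for all $r_0'\le q\le r\le 2$, which is assertion (ii). Finally, I would take adjoints via Lemma \ref{dual-2}: each $L^a$-$L^b$ bound on $S(t,s)$ with $r_0'\le a\le b\le 2$ gives an $L^{b'}$-$L^{a'}$ bound on $T(t,s)$ with the same operator norm, and the identity $(3/a-3/b)/2=(3/b'-3/a')/2$ shows the exponent is again $(3/q-3/r)/2$ with $2\le b'\le a'\le r_0$; this is exactly assertion (i).
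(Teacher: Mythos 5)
Your proposal is correct and follows essentially the same route as the paper: global $L^{r_0}$-boundedness of $T(t,s)$ from \eqref{decompo}, \eqref{unif-bdd} and Proposition \ref{unif-loc}, dualization via Lemma \ref{dual-2}, then the energy identity \eqref{energy-adj-diff} combined with Gagliardo--Nirenberg to produce a Bernoulli-type differential inequality whose integration gives the corner decay, followed by interpolation and a final duality step. The only (immaterial) difference is that the paper derives the differential inequality for every $q\in[r_0',2)$ and interpolates norms of the fixed function $T(t,s)^*g$, whereas you prove only the single corner $q=r_0'$, $r=2$ and fill the triangle by Riesz--Thorin.
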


\begin{proof}
In view of \eqref{decompo}, we see from
\eqref{unif-bdd} and \eqref{LqLr-wh} with the aid of
Lemma \ref{bogov} that
\[
\|T(t,s)f\|_{r_0}\leq C\|f\|_{r_0}
\]
for $t-s>3$ and, therefore, for all $t>s\geq 0$ and
$f\in L^{r_0}_\sigma(D)$ with some constant $C=C(m,r_0,\theta,D)>0$
since we know the estimate for
$t-s\leq 3$ by Proposition \ref{unif-loc}.
By duality we have
\begin{equation}
\|T(t,s)^*g\|_q\leq C\|g\|_q
\label{unif-bdd-adj}
\end{equation}
for all $t>s\geq 0$ and $g\in L^q_\sigma(D)$ with $q=r_0^\prime\in (1,2)$
and, thereby, with $q\in [r_0^\prime,2]$
on account of contraction property \eqref{energy-adj} in $L^2$.
Hence, by embedding relation we obtain
\[
\|T(t,s)^*g\|_2
\leq\|T(t,s)^*g\|_6^\mu \|T(t,s)^*g\|_q^{1-\mu}
\leq C\|\nabla T(t,s)^*g\|_2^\mu \|g\|_q^{1-\mu}
\]
for $g\in L^q_\sigma(D)$, where $q\in [r_0^\prime,2)$ and
$1/2=\mu/6+(1-\mu)/q$.
We fix $t>0$, then the last inequality together with the energy relation
\eqref{energy-adj-diff} implies that
\[
\partial_s\|T(t,s)^*g\|_2^2
\geq C\|g\|_q^{-2(1/\mu-1)}\|T(t,s)^*g\|_2^{2/\mu}
\]
for all $s\in [0,t]$ and $g\in C^\infty_{0,\sigma}(D)\setminus\{0\}$.
By solving this differential inequality (as in \cite[Section 5]{MSol})
we conclude \eqref{LqLr-adj} when $r=2$.
Combining this with \eqref{unif-bdd-adj} leads to
\eqref{LqLr-adj} for $r_0^\prime\leq q\leq r\leq 2$.
The other estimate \eqref{LqLr} follows from \eqref{LqLr-adj} by duality.
\end{proof}

Let us derive \eqref{unif-bdd} when $r>2$.
As we will see, our main task is to estimate the integral over the interval
$(s+1,t-1)$ of the RHS of \eqref{duhamel-1};
in fact, the other terms are easily treated as follows.
By the energy relation \eqref{energy-adj} (with $\tau=t-1$) and
by Proposition \ref{unif-loc} (with $\tau_*=1$)
for $T(t,s)^*$ we obtain
\begin{equation}
\begin{split}
|\langle \widetilde f, T(t,s)^*\psi\rangle_D|
&\leq \|\widetilde f\|_{2,D_{3R_0}}\|T(t,s)^*\psi\|_2  \\
&\leq C\|f\|_r\|T(t,t-1)^*\psi\|_2  \\
&\leq C\|f\|_r\|\psi\|_{r^\prime}
\end{split}
\label{top}
\end{equation}
for $t-s>3$ and every $r\in (2,\infty)$,
where
$1/r^\prime+1/r=1$ and
$D_{3R_0}=D\cap B_{3R_0}$ since $\widetilde f=0$ outside $D_{3R_0}$.
From \eqref{force} in addition to the same reasoning as above it follows that
\begin{equation*}
\begin{split}
&\quad 
\left|
\left(\int_s^{s+1}+\int_{t-1}^t\right)\langle F(\tau), T(t,\tau)^*\psi\rangle_D\,d\tau
\right|  \\
&\leq \left(\int_s^{s+1}+\int_{t-1}^t\right)
\|F(\tau)\|_{2,A_{R_0}}\|T(t,\tau)^*\psi\|_2\,d\tau \\
&\leq C\|T(t,t-1)^*\psi\|_2 \int_s^{s+1}\|F(\tau)\|_r\,d\tau  \\
&\qquad\qquad
+C\|\psi\|_{r^\prime}\int_{t-1}^t\|F(\tau)\|_r (t-\tau)^{-(3/r^\prime-3/2)/2}\,d\tau \\
&\leq C(m+1)\|f\|_r\|\psi\|_{r^\prime}
\Big( \int_s^{s+1}(\tau -s)^{-1/2}\,d\tau  \\
&\qquad\qquad
+\int_{t-1}^t (\tau -s)^{-3/2r}(t-\tau)^{-(3/r^\prime-3/2)/2}\,d\tau \Big),
\end{split}
\end{equation*}
which yields
\begin{equation}
\left|\int_s^{s+1}+\int_{t-1}^t\right|
\leq C\|f\|_r\|\psi\|_{r^\prime}\big\{1+(t-s-1)^{-3/2r}\big\}
\label{near-end}
\end{equation}
for $t-s>3$ and every $r\in (2,\infty)$ with some constant
$C=C(m,r,\theta,D)>0$.

We turn to the integral
\[
J:=\int_{s+1}^{t-1}\langle F(\tau), T(t,\tau)^*\psi\rangle_D\,d\tau
\]
for which three steps are needed.
The details are given as follows.

Since $T(t,\tau)^*\psi=S(t,\tau)\psi\in Y_2(D)$ vanishes at the 
boundary $\partial D$,
see \eqref{reg-sp}, and thereby satisfies the Poincar\'e inequality
in the bounded domain $D_{3R_0}$, we have
\begin{equation}
\begin{split}
|J|
&\leq \int_{s+1}^{t-1}\|F(\tau)\|_{2,A_{R_0}}\|T(t,\tau)^*\psi\|_{2,D_{3R_0}}\,d\tau \\
&\leq C(m+1)\|f\|_r\int_{s+1}^{t-1}(\tau-s)^{-3/2r}\|\nabla T(t,\tau)^*\psi\|_2\,d\tau
\end{split}
\label{J}
\end{equation}
by \eqref{force} when $r>2$.
We use the energy relation \eqref{energy-adj} to find
\begin{equation}
\begin{split}
|J|
&\leq C(m+1)\|f\|_r\left(\int_{s+1}^{t-1}(\tau-s)^{-3/r}\,d\tau\right)^{1/2}
\left(\int_{s+1}^{t-1}\|\nabla T(t,\tau)^*\psi\|_2^2\,d\tau\right)^{1/2}  \\
&\leq C(m+1)\|f\|_r\|T(t,t-1)^*\psi\|_2
\left(\int_{s+1}^{t-1}(\tau-s)^{-3/r}\,d\tau\right)^{1/2}.
\end{split}
\label{J-2}
\end{equation}
By Proposition \ref{unif-loc} (with $\tau_*=1$) there exists a
constant $C=C(m,r,\theta,D)>0$ such that
\[
|J|\leq C\|f\|_r\|\psi\|_{r^\prime}
\]
for $t-s>3$ provided $2<r<3$, which combined with \eqref{near-end}
as well as \eqref{top} yields \eqref{unif-bdd} for such $r$.

Let us proceed to the next step.
By virtue of Lemma \ref{auxi} we have \eqref{LqLr-adj} with
$3/2<q\leq r\leq 2$ for all $t>s\geq 0$ as a consequence of the result above.
Let $r\in (3,\infty)$.
Given $\varepsilon >0$ arbitrarily small,
we choose $q\in (3/2,2)$ (close to $3/2$) to get
\begin{equation}
\begin{split}
\|T(t,s)^*\psi\|_2
&\leq C_\varepsilon (t-s-1)^{-1/4+\varepsilon}\|T(t,t-1)^*\psi\|_q  \\
&\leq C_\varepsilon (t-s-1)^{-1/4+\varepsilon}\|\psi\|_{r^\prime}
\end{split}
\label{weak-d1}
\end{equation}
for $t-s>1$ on account of Proposition \ref{unif-loc} 
as well as the backward semigroup property \eqref{b-semi}.
We split the integral in \eqref{J} into
\begin{equation}
\int_{s+1}^{t-1}(\tau-s)^{-3/2r}\|\nabla T(t,\tau)^*\psi\|_2\,d\tau
=\int_{s+1}^{(s+t)/2}+\int_{(s+t)/2}^{t-1}
\label{J-split}
\end{equation}
for $t-s>3$.
By use of \eqref{weak-d1} together with \eqref{energy-adj} we obtain
\begin{equation}
\begin{split}
\int_{s+1}^{(s+t)/2}\|\nabla T(t,\tau)^*\psi\|_2^2\,d\tau
&\leq\frac{1}{2}\|T(t,(s+t)/2)^*\psi\|_2^2  \\
&\leq C_\varepsilon (t-s-2)^{-1/2+2\varepsilon}\|\psi\|_{r^\prime}^2
\end{split}
\label{J-first}
\end{equation}
for $t-s>2$ when $r>3$.
Following the notation \eqref{adj-2}, we set
$w(t-\tau)=T(t,\tau)^*\psi$;
then \eqref{J-first} is rewritten as
\[
\int_{(t-s)/2}^{t-s-1}\|\nabla w(\tau)\|_2^2\,d\tau
\leq C_\varepsilon (t-s-2)^{-1/2+2\varepsilon} \|\psi\|_{r^\prime}^2
\]
for $t-s>2$.
We then apply Lemma \ref{growth-est} to find the growth estimate
\begin{equation}
\int_{(s+t)/2}^{t-1}\|\nabla T(t,\tau)^*\psi\|_2\,d\tau
=\int_1^{(t-s)/2}\|\nabla w(\tau)\|_2\,d\tau
\leq C_\varepsilon (t-s-2)^{1/4+\varepsilon}\|\psi\|_{r^\prime}
\label{J-second}
\end{equation}
for $t-s>2$, from which the second integral of \eqref{J-split} is estimated as
\begin{equation*}
\begin{split}
\int_{(s+t)/2}^{t-1}
&\leq C(t-s)^{-3/2r}\int_{(s+t)/2}^{t-1}\|\nabla T(t,\tau)^*\psi\|_2\,d\tau  \\
&\leq C_\varepsilon (t-s)^{-3/2r+1/4+\varepsilon}\|\psi\|_{r^\prime}
\end{split}
\end{equation*}
for $t-s>3$, while the first one of \eqref{J-split} is discussed
by use of \eqref{J-first} in the similar way to the previous step as
\begin{equation*}
\begin{split}
\int_{s+1}^{(s+t)/2}
&\leq C(t-s)^{(1-3/r)/2}
\left(\int_{s+1}^{(s+t)/2}\|\nabla T(t,\tau)^*\psi\|_2^2\,d\tau\right)^{1/2} \\
&\leq C_\varepsilon (t-s)^{-3/2r+1/4+\varepsilon}\|\psi\|_{r^\prime}
\end{split}
\end{equation*}
for $t-s>3$.
In view of \eqref{J} with \eqref{J-split} we deduce from both estimates above that
\[
|J|\leq C\|f\|_r\|\psi\|_{r^\prime}
\]
for $t-s>3$ with some constant $C=C(m,r,\theta,D)>0$ provided $3<r<6$.
This together with \eqref{near-end} as well as \eqref{top}
gives \eqref{unif-bdd} for such $r$ (there is no need to fill in
the case $r=3$ although one can do that by interpolation).
We thus obtain \eqref{LqLr-adj} with $6/5<q\leq r\leq 2$ for all $t>s\geq 0$
by Lemma \ref{auxi}.

Suppose $6\leq r<\infty$.
Then \eqref{weak-d1} can be improved as
\begin{equation}
\|T(t,\tau)^*\psi\|_2
\leq C_\varepsilon (t-s-1)^{-1/2+\varepsilon}\|\psi\|_{r^\prime}
\label{weak-d2}
\end{equation}
for $t-s>1$, where $\varepsilon >0$ is arbitrary.
With \eqref{weak-d2} in hand, estimates \eqref{J-first} and \eqref{J-second}
can be respectively replaced by
\[
\int_{s+1}^{(s+t)/2}\|\nabla T(t,\tau)^*\psi\|_2^2\,d\tau
\leq C_\varepsilon (t-s-2)^{-1+2\varepsilon}\|\psi\|_{r^\prime}^2
\]
and by
\[
\int_{(s+t)/2}^{t-1}\|\nabla T(t,\tau)^*\psi\|_2\,d\tau
\leq C_\varepsilon (t-s-2)^\varepsilon\|\psi\|_{r^\prime}
\]
for $t-s>2$, where the latter follows from the former owing to
Lemma \ref{growth-est}.
Then the same argument with use of splitting \eqref{J-split}
as in the second step leads to
\[
|J|\leq C_\varepsilon (t-s)^{-3/2r+\varepsilon}\|f\|_r\|\psi\|_{r^\prime}
\leq C\|f\|_r\|\psi\|_{r^\prime}
\]
for $t-s>3$ with some constant $C=C(m,r,\theta,D)>0$ when choosing
an appropriate $\varepsilon >0$ for given $r\in [6,\infty)$.
We collect \eqref{top}, \eqref{near-end} and the last estimate to
furnish \eqref{unif-bdd} for every $r\in [6,\infty)$.
Hence Lemma \ref{auxi} concludes
\eqref{LqLr} with $2\leq q\leq r<\infty$ as well as
\eqref{LqLr-adj} with $1<q\leq r\leq 2$ for all $t>s\geq 0$.
\begin{remark}
In 2D case the argument above does not work even in the first step.
In fact, the decay rate $(t-s)^{-3/2q}$ of $\|F(t)\|_q$
must be replaced by the slower one $(t-s)^{-1/q}$ in \eqref{force}
and, thereby, the last integral of \eqref{J-2} becomes
$\int_{s+1}^{t-1}(\tau -s)^{-2/r}d\tau$
which cannot be bounded because $r>2$.
Thus we do not know \eqref{unif-bdd} even if $r\in (2,\infty)$
is close to $2$.
The difficulty of 2D case is described in \cite{Hi16}
(even for the autonomous Oseen system) from
the viewpoint of spectral analysis.
\label{2dim}
\end{remark}

It remains to show the opposite case, that is,
\eqref{LqLr} with $1<q\leq r\leq 2$ and \eqref{LqLr-adj} with
$2\leq q\leq r<\infty$.
We fix $t>3$.
Given $g\in C^\infty_{0,\sigma}(D)$, we describe the solution
$T(t,s)^*g$ to the backward system \eqref{backward} in the form
\begin{equation}
T(t,s)^*g=(1-\phi)U(t,s)^*g
+\mathbb B\left[\big(U(t,s)^*g\big)\cdot\nabla\phi\right]+u(s),
\label{decompo-adj}
\end{equation}
and intend to estimate the perturbation $u(s)=u(s;t)$.
Here, $\phi$ is the same cut-off function as in \eqref{decompo} and
$\mathbb B$ denotes the same Bogovskii operator there.
Recall that $U(t,s)^*g$ is the solution to the whole space problem
\eqref{adj-wh}--\eqref{adj-wh-sub} with the final velocity $g$ at the final time $t$.
Then the function $u(s)$ obeys
\begin{equation*}
\begin{split}
-\partial_su
&=\Delta u-(\eta(s)+\omega(s)\times x)\cdot\nabla u+\omega(s)\times u+\nabla p+G, \\
\mbox{div $u$}&=0,
\end{split}
\end{equation*}
in $D\times [0,t]$ subject to
\begin{equation*}
\begin{split}
&u|_{\partial D}=0, \\
&u\to 0\quad\mbox{as $|x|\to\infty$},  \\
&u(\cdot,t)=\widetilde g:=\phi g-\mathbb B[g\cdot\nabla\phi],
\end{split}
\end{equation*}
where $p(s)$ stands for the pressure associated with $T(t,s)^*g$ and
\begin{equation*}
\begin{split}
G(x,s)
&=-2\nabla\phi\cdot\nabla U(t,s)^*g
-[\Delta\phi-(\eta(s)+\omega(s)\times x)\cdot\nabla\phi]\,U(t,s)^*g  \\
&\quad 
+\mathbb B\left[\partial_s\big(U(t,s)^*g\big)\cdot\nabla\phi\right]
+\Delta\mathbb B\left[\big(U(t,s)^*g\big)\cdot\nabla\phi\right]  \\
&\quad
-(\eta(s)+\omega(s)\times x)\cdot\nabla
\mathbb B\left[\big(U(t,s)^*g\big)\cdot\nabla\phi\right]  \\
&\quad
+\omega(s)\times\mathbb B\left[\big(U(t,s)^*g\big)\cdot\nabla\phi\right],
\end{split}
\end{equation*}
which satisfies
\begin{equation}
\|G(s)\|_q\leq
\left\{
\begin{array}{ll}
C(m+1)(t-s)^{-1/2}\|g\|_q,
&0<t-s<1, \\
C(m+1)(t-s)^{-3/2q}\|g\|_q, \qquad
&t-s\geq 1,
\end{array}
\right.
\label{force-adj}
\end{equation}
for $1<q<\infty$.
On account of \eqref{adj-reg}, \eqref{adj-reg-wh} and Lemma \ref{bogov} we have
\[
u\in C^1([0,t]; L^q_\sigma(D))
\]
as well as $u(s)\in Y_q(D)$ for every $q\in (1,\infty)$.
Let $\psi\in C^\infty_{0,\sigma}(D)$, then we see from \eqref{evo-1}
and \eqref{dual-1} that
\begin{equation*}
\begin{split}
\partial_\tau\langle\psi, T(\tau,s)^*u(\tau)\rangle_D
&=\partial_\tau\langle T(\tau,s)\psi, u(\tau)\rangle_D  \\
&=\langle T(\tau,s)\psi, \partial_\tau u(\tau)\rangle_D
+\langle -L_+(\tau)T(\tau,s)\psi, u(\tau)\rangle_D  \\
&=\langle T(\tau,s)\psi, \partial_\tau u(\tau)-L_-(\tau)u(\tau)\rangle_D  \\
&=-\langle T(\tau,s)\psi, G(\tau)\rangle_D,
\end{split}
\end{equation*}
which leads to the Duhamel formula in the weak form
\begin{equation}
\langle\psi, u(s)\rangle_D
=\langle T(t,s)\psi, \widetilde g\rangle_D
+\int_s^t\langle T(\tau,s)\psi, G(\tau)\rangle_D\,d\tau.
\label{duhamel-2}
\end{equation}

Let $r\in (2,\infty)$, then our task is to derive
\begin{equation}
\|u(s)\|_r\leq C\|g\|_r  \qquad
\mbox{for $t-s>3$},
\label{unif-bdd-2}
\end{equation}
as in \eqref{unif-bdd}.
The corresponding claim to Lemma \ref{auxi} is the following,
whose proof is essentially the same.
\begin{lemma}
In addition to the conditions in Theorem \ref{main},
suppose that, with some $r_0\in (2,\infty)$, estimate
\eqref{unif-bdd-2} holds for all $g\in C^\infty_{0,\sigma}(D)$.

\begin{enumerate}
\item
Let $r_0^\prime\leq q\leq r\leq 2$, where $1/r_0^\prime+1/r_0=1$.
Then there is a constant $C=C(m,q,r,r_0,\theta,D)>0$ such that
\eqref{LqLr} holds for all $t>s\geq 0$ and $f\in L^q_\sigma(D)$.

\item
Let $2\leq q\leq r\leq r_0$.
Then there is a constant $C=C(m,q,r,r_0,\theta,D)>0$ such that
\eqref{LqLr-adj} holds for all $t>s\geq 0$ and $g\in L^q_\sigma(D)$.

\end{enumerate}
\label{auxi-2}
\end{lemma}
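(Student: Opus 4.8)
The plan is to mirror the proof of Lemma \ref{auxi}, interchanging the roles of $T(t,s)$ and its adjoint throughout and using the forward energy relation in place of the backward one.

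First I would convert the hypothesis \eqref{unif-bdd-2} on the perturbation $u(s)$ into an $L^{r_0}$-bound for the adjoint itself. Returning to the decomposition \eqref{decompo-adj}, the cut-off term $(1-\phi)U(t,s)^*g$ and the Bogovskii corrector $\mathbb B[(U(t,s)^*g)\cdot\nabla\phi]$ are controlled in $L^{r_0}$ by \eqref{LqLr-wh} (for the backward whole-space operator $U(t,s)^*$) together with Lemma \ref{bogov}; combined with \eqref{unif-bdd-2} this yields $\|T(t,s)^*g\|_{r_0}\le C\|g\|_{r_0}$ for $t-s>3$. Since the complementary range $t-s\le 3$ is covered by Proposition \ref{unif-loc} (applied to $T(t,s)^*=S(t,s)$), the bound holds for all $t>s\ge 0$. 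Taking adjoints via Lemma \ref{dual-2} gives $\|T(t,s)f\|_q\le C\|f\|_q$ with $q=r_0^\prime\in(1,2)$, and interpolation with the $L^2$-contraction furnished by the energy relation \eqref{energy} extends this to every $q\in[r_0^\prime,2]$.

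Next I would upgrade this uniform $L^q$-bound of $T(t,s)f$ to genuine decay \eqref{LqLr} at $r=2$. For $q\in[r_0^\prime,2)$, writing $1/2=\mu/6+(1-\mu)/q$, the embedding $\|T(t,s)f\|_6\le C\|\nabla T(t,s)f\|_2$ and the $L^q$-bound above give
\[
\|T(t,s)f\|_2\le\|T(t,s)f\|_6^\mu\|T(t,s)f\|_q^{1-\mu}
\le C\|\nabla T(t,s)f\|_2^\mu\|f\|_q^{1-\mu}.
\]
Fixing $s$ and inserting this into the differential energy equality \eqref{energy-diff} produces the autonomous differential inequality
\[
-\partial_t\|T(t,s)f\|_2^2\ge C\|f\|_q^{-2(1/\mu-1)}\|T(t,s)f\|_2^{2/\mu},
\]
whose integration (as in \cite[Section 5]{MSol}) yields \eqref{LqLr} for $r=2$. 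Interpolating this with the uniform $L^q$-bound then covers the full range $r_0^\prime\le q\le r\le 2$, which is the first assertion; the second assertion \eqref{LqLr-adj} for $2\le q\le r\le r_0$ follows at once by duality.

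I expect no genuinely new obstacle: the argument is the exact transpose of Lemma \ref{auxi}. The only point demanding care is bookkeeping of which energy relation applies---here the forward relation \eqref{energy}, \eqref{energy-diff} for $T(t,s)f$, giving decay as $t$ increases, rather than the backward relation \eqref{energy-adj}, \eqref{energy-adj-diff} used before---and checking that \eqref{LqLr-wh} is invoked for the backward whole-space operator $U(t,s)^*$ so that the terms in \eqref{decompo-adj} are estimated with the correct decay in $t-s$.
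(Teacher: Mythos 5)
Your proposal is correct and is precisely what the paper intends: the paper's own "proof" of Lemma \ref{auxi-2} consists of the remark that it is essentially the same as that of Lemma \ref{auxi}, with $T(t,s)$ and $T(t,s)^*$ interchanged, and your argument carries out exactly that transposition (uniform $L^{r_0}$-bound for $T(t,s)^*$ from \eqref{decompo-adj}, duality and $L^2$-contraction via \eqref{energy}, the Sobolev-embedding/differential-inequality step with the forward relation \eqref{energy-diff}, then interpolation and duality). The bookkeeping points you flag (forward versus backward energy relation, \eqref{LqLr-wh} for $U(t,s)^*$) are indeed the only places where the two proofs differ.
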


As in \eqref{top} and \eqref{near-end},
we use \eqref{energy}, \eqref{force-adj} and 
Proposition \ref{unif-loc} to obtain
\[
|\langle T(t,s)\psi, \widetilde g\rangle_D|
+\left|
\left(\int_s^{s+1}+\int_{t-1}^t\right)\langle T(\tau,s)\psi, G(\tau)\rangle_D\,d\tau
\right|
\leq C\|g\|_r\|\psi\|_{r^\prime}
\]
for $t-s>3$ and every $r\in (2,\infty)$, where $1/r^\prime+1/r=1$.
If $2<r<3$, then we observe
\[
\left|\int_{s+1}^{t-1}\langle T(\tau,s)\psi, G(\tau)\rangle_D\,d\tau\right|
\leq C\|g\|_r\|\psi\|_{r^\prime}
\]
for $t-s>3$ by the same way as in the treatment of \eqref{J}.
In view of \eqref{duhamel-2} we obtain \eqref{unif-bdd-2} for $r\in (2,3)$,
which gives \eqref{LqLr} with $3/2<q\leq r\leq 2$ for all $t>s\geq 0$
by Lemma \ref{auxi-2}.

We next proceed to the case $r\in (3,6)$ by splitting the integral over
$(s+1,t-1)$ as in \eqref{J-split},
where estimates \eqref{J-first} and \eqref{J-second} are replaced by
\[
\int_{s+1}^{(s+t)/2}\|\nabla T(\tau,s)\psi\|_2\,d\tau
\leq C_\varepsilon (t-s-2)^{1/4+\varepsilon}\|\psi\|_{r^\prime}
\]
and
\begin{equation*}
\begin{split}
\int_{(s+t)/2}^{t-1}\|\nabla T(\tau,s)\psi\|_2^2\,d\tau
&\leq\frac{1}{2}\|T((s+t)/2,s)\psi\|_2^2  \\
&\leq C_\varepsilon (t-s-2)^{-1/2+2\varepsilon}\|\psi\|_{r^\prime}^2
\end{split}
\end{equation*}
for $t-s>2$, where $\varepsilon >0$ is arbitrarily small;
in fact, the latter implies the former by Lemma \ref{growth-est}.
Then the same argument as before yields \eqref{unif-bdd-2}
for $r\in (3,6)$ and, therefore, \eqref{LqLr} with
$6/5<q\leq r\leq 2$.

Repeating this argument once more by use of \eqref{LqLr} with such $(q,r)$,
we find \eqref{LqLr-adj} with $2\leq q\leq r<\infty$ as well as
\eqref{LqLr} with $1<q\leq r\leq 2$
for all $t>s\geq 0$.
Finally, the remaining case $q<2<r$ for both estimates
is obvious on account of semigroup properties \eqref{semi}, \eqref{b-semi}.
The proof of Theorem \ref{main} is complete.
\hfill
$\Box$

\section{Application to the Navier-Stokes problem}
\label{appli}

{\em 5.1. How to construct the Navier-Stokes flow}

Let us apply the decay estimates of the evolution operator obtained in
Theorems \ref{main} and \ref{int-decay} to the Navier-Stokes 
initial value problem \eqref{NS}--\eqref{IC}.
Concerning the behavior of
the translational and angular velocities $\eta,\, \omega$,
we could consider several situations,
for instance, the one in which they converge to some constant vectors
as $t\to\infty$, the one in which they oscillate, and so on.
The only claim we are going to show is the
stability of the rest state $u=0$ in the simplest situation
in which the motion of the rigid body becomes slow
as time goes on, that is, its velocity
$\eta +\omega\times x$ tends to zero as $t\to\infty$.
The stability of nontrivial states is of course more involved
as well as interesting and will be discussed elsewhere.
The emphasis is how to treat the nonlinearity
$u\cdot\nabla u$ (toward analysis of stability of such states)
rather than the result (stability of the rest state) itself.
This is by no means obvious because of lack of pointwise decay estimate
\eqref{LqLr-grad} for the gradient of the evolution operator
as $(t-s)\to\infty$.
In order to make the idea clearer, first of all, 
it would be better to consider the problem
\eqref{NS}--\eqref{IC} in which the no-slip boundary condition
\eqref{noslip} is replaced by the homogeneous one
$u|_{\partial D}=0$ although this modification only on $\partial D$
is not physically relevant.
We will then discuss the right problem \eqref{NS}--\eqref{IC}
in the next subsection.

The problem \eqref{NS}, \eqref{sp-infty}, \eqref{IC} subject to
$u|_{\partial D}=0$ is formulated as the initial value problem
\[
\partial_tu+L_+(t)u+P(u\cdot\nabla u)=0, \qquad t\in(0,\infty); \qquad
u(0)=u_0,
\]
in $L^q_\sigma(D)$, which is formally converted to the integral equation
\begin{equation}
u(t)=T(t,0)u_0-\int_0^t T(t,\tau)P(u\cdot\nabla u)(\tau)\,d\tau
\label{INS}
\end{equation}
and even to its weak form
(see \cite{KoO}, \cite{BM95}, \cite{KoY}, \cite{Y})
\begin{equation}
\begin{split}
\langle u(t), \psi\rangle_D
=\langle T(t,0)u_0, \psi\rangle_D
+\int_0^t\langle (u\otimes u)(\tau),
&\nabla T(t,\tau)^*\psi\rangle_D \,d\tau, \\
&\forall\,\psi\in C^\infty_{0,\sigma}(D),
\end{split}
\label{w-INS}
\end{equation}
by using the divergence structure
$u\cdot\nabla u=\mbox{div $(u\otimes u)$}$.

We intend to solve \eqref{w-INS} globally in time for
$u_0\in L^3_\sigma(D)$ with small $\|u_0\|_3$.
Let $r\in (3,\infty)$.
Under the same conditions on $(\eta,\omega)$ as in Theorem \ref{main},
there is a constant $c_r=c_r(m,\theta,D)>0$ such that
\begin{equation}
\|T(t,0)u_0\|_r
\leq c_rt^{-1/2+3/2r}\|u_0\|_3
\label{top-NS}
\end{equation}
for all $t>0$, in view of which it is reasonable to seek a
solution of class
\begin{equation}
\begin{split}
E_r:=\big\{
u\in C_w((0,\infty); L^r_\sigma(D));\;\;
t^{1/2-3/2r}u\in 
&L^\infty(0,\infty; L^r_\sigma(D)),  \\
&\lim_{t\to 0}\|u\|_{E_r(t)}=0 \big\},
\end{split}
\label{where}
\end{equation}
which is a Banach space endowed with norm
\[
\|\cdot\|_{E_r}:=\sup_{t>0}\|\cdot\|_{E_r(t)},
\]
where
\[
\|u\|_{E_r(t)}=\sup_{0<\tau\leq t}\tau^{1/2-3/2r}\|u(\tau)\|_r, \qquad t>0.
\]
Especially, the case $r=4$ plays a role because we know \eqref{grad-decay}.
In fact, given $u,\, v\in E_4$, we have
\begin{equation}
\begin{split}
&\quad \left|\int_0^t\langle (u\otimes v)(\tau), 
\nabla T(t,\tau)^*\psi\rangle_D\,d\tau\right|  \\
&\leq \|u\|_{E_4(t)}\|v\|_{E_4(t)}\int_0^t \tau^{-1/4}
\|\nabla T(t,\tau)^*\psi\|_2\,d\tau
\end{split}
\label{duha-NS}
\end{equation}
for all $t>0$.
The key observation is the following.
\begin{lemma}
Let $q\in (6/5,2]$.
Under the same conditions as in Theorem \ref{main}, there is a constant
$C=C(m,q,\theta,D)>0$ such that
\begin{equation}
\int_0^t \tau^{-1/4}\|\nabla T(t,\tau)^*\psi\|_2\,d\tau
\leq Ct^{-3/2q+1}\|\psi\|_q
\label{key-0}
\end{equation}
for all $t>0$ and $\psi\in L^q_\sigma(D)$
whenever \eqref{mag} is satisfied.
\label{key}
\end{lemma}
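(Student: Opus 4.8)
The plan is to prove the bound
\begin{equation*}
\int_0^t \tau^{-1/4}\|\nabla T(t,\tau)^*\psi\|_2\,d\tau
\leq Ct^{-3/2q+1}\|\psi\|_q
\end{equation*}
by splitting the integral at the midpoint $\tau=t/2$ and handling the two pieces by different means. On the early interval $(0,t/2)$ the factor $\tau^{-1/4}$ is under control and the gradient term must be integrated using the square-integral energy estimate \eqref{grad-decay} together with the growth lemma (Lemma \ref{growth-est}). On the late interval $(t/2,t)$ the factor $\tau^{-1/4}\le C t^{-1/4}$ can be pulled out, and the remaining $\int_{t/2}^t\|\nabla T(t,\tau)^*\psi\|_2\,d\tau$ is again converted from an $L^2$-in-time bound to an $L^1$-in-time bound via the Schwarz inequality and the energy relation \eqref{energy-adj}.

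\textbf{The late piece.}
On $(t/2,t)$ I would use the first assertion of \eqref{grad-decay}, namely $\int_{s}^\infty\|\nabla T(t,\sigma)^*\psi\|_2^2\,d\sigma$ (in the backward-adjoint form, the second line of \eqref{grad-decay}) to bound $\int_{t/2}^t\|\nabla T(t,\tau)^*\psi\|_2^2\,d\tau$. Since the interval has length $t/2$, Schwarz gives
\begin{equation*}
\int_{t/2}^t\|\nabla T(t,\tau)^*\psi\|_2\,d\tau
\le \Big(\tfrac{t}{2}\Big)^{1/2}
\Big(\int_{t/2}^t\|\nabla T(t,\tau)^*\psi\|_2^2\,d\tau\Big)^{1/2}.
\end{equation*}
Feeding in $\tau^{-1/4}\le C t^{-1/4}$ and the decay rate $t^{-3/q+3/2}$ coming from \eqref{grad-decay} produces a power of $t$ that I expect to match $t^{-3/2q+1}$ after collecting exponents; this is the routine bookkeeping I would carry out carefully but do not expect to be an obstacle.

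\textbf{The early piece.}
On $(0,t/2)$ I cannot pull $\tau^{-1/4}$ out, so I would dyadically decompose $(0,t/2)=\bigcup_k(t/2^{k+2},t/2^{k+1}]$ and on each dyadic block apply Schwarz in $\tau$, combining the square-integral estimate \eqref{grad-decay} with the local smoothing and boundedness information from Proposition \ref{unif-loc} for the short-time behavior near $\tau=0$. Alternatively, and more cleanly, I would invoke Lemma \ref{growth-est} directly: writing $w(\tau)=T(t,\tau)^*\psi$ in the backward variable and using that the square integral of $\|\nabla w\|_2$ over dyadic intervals decays like a power, Lemma \ref{growth-est} upgrades the $L^2$-in-time control to the $L^1$-in-time growth bound $\int|\nabla w|\lesssim t^{(1-\alpha)/2}$ with the correct $\alpha$. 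The weight $\tau^{-1/4}$ is then absorbed against the gain from \eqref{grad-decay}, and one must check that the resulting exponent again assembles to $t^{-3/2q+1}$.

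\textbf{Main obstacle.}
The hard part will be the early interval near $\tau=0$: there both the weight $\tau^{-1/4}$ and the short-time smoothing singularity of $\nabla T(t,\tau)^*$ compete, and naive use of \eqref{grad-decay} alone is insufficient because that estimate controls a tail integral rather than behavior at small $\tau$. The key is to interface the large-time square-integral estimate \eqref{grad-decay} with the short-time bounds of Proposition \ref{unif-loc} (with $\tau_*$ fixed, say $\tau_*=1$ rescaled appropriately), and to verify that the dyadic sum of the resulting contributions converges — which is exactly the content that Lemma \ref{growth-est} is designed to deliver, provided the exponent $\alpha$ arising from $q\in(6/5,2]$ satisfies $\alpha<1$. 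Tracking that the admissible range of $q$ keeps $\alpha$ in the convergent regime, and that the two pieces combine to the single power $t^{-3/2q+1}$, is where I would spend the most care.
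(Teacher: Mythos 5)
Your split at $\tau=t/2$ and your list of ingredients (estimate \eqref{grad-decay}, the Schwarz inequality, Lemma \ref{growth-est}) are the right ones, but you have attached them to the wrong halves of the integral, and your late-interval argument fails as written. The singularity of $\|\nabla T(t,\tau)^*\psi\|_2$ sits at $\tau\uparrow t$, not at $\tau=0$: writing $w(t-\tau)=T(t,\tau)^*\psi$ as in \eqref{adj-2}, the adjoint behaves near the final time like a semigroup near its initial time, so the gradient blows up as $t-\tau\to 0$. Consequently the early interval $(0,t/2)$ is the easy one: there is no smoothing singularity there, the weight $\tau^{-1/4}$ is square-integrable, and a single Schwarz inequality
\[
\int_0^{t/2}\tau^{-1/4}\|\nabla T(t,\tau)^*\psi\|_2\,d\tau
\le\Bigl(\int_0^{t/2}\tau^{-1/2}\,d\tau\Bigr)^{1/2}
\Bigl(\int_0^{t/2}\|\nabla T(t,\tau)^*\psi\|_2^2\,d\tau\Bigr)^{1/2}
\le Ct^{-3/2q+1}\|\psi\|_q
\]
settles it for every $q\in(1,2]$, using \eqref{grad-decay} with $s=t/2$ (this is exactly the range $\int_0^s$ that \eqref{grad-decay} controls); no dyadic decomposition, no Lemma \ref{growth-est}, and no Proposition \ref{unif-loc} are needed there, and no constraint $q>6/5$ arises there.

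The genuine gap is your late piece. The second line of \eqref{grad-decay} bounds $\int_0^s\|\nabla T(t,\sigma)^*\psi\|_2^2\,d\sigma$, i.e.\ the portion of the time axis away from the final time, with decay in $t-s$; it says nothing about $\int_{t/2}^t\|\nabla T(t,\tau)^*\psi\|_2^2\,d\tau$ (you would need $s=t$, where the bound is vacuous for $q<2$). Worse, the decay you feed in is false: already for the heat semigroup one has
$\int_{t/2}^t\|\nabla e^{(t-\tau)\Delta}\psi\|_2^2\,d\tau
=\tfrac{1}{2}\bigl(\|\psi\|_2^2-\|e^{(t/2)\Delta}\psi\|_2^2\bigr)\to\tfrac{1}{2}\|\psi\|_2^2>0$,
so this quantity does not decay at all, and the energy relation \eqref{energy-adj} likewise yields only the non-decaying bound $\tfrac{1}{2}\|\psi\|_2^2$. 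This is precisely why a single Schwarz inequality over the whole interval of length $t/2$ cannot work for $q<2$, and why Lemma \ref{growth-est} exists: one must decompose $(t/2,t)$ dyadically toward the final time, use \eqref{grad-decay} on each block in the form $\int_{t-2a}^{t-a}\|\nabla T(t,\tau)^*\psi\|_2^2\,d\tau\le Ca^{-3/q+3/2}\|\psi\|_q^2$, apply Schwarz blockwise, and sum; the block contributions are $Ca^{(1-\alpha)/2}\|\psi\|_q$ with $\alpha=3/q-3/2$, and the dyadic sum converges exactly when $\alpha<1$, i.e.\ $q>6/5$. That is the paper's estimate \eqref{key-2}, obtained by applying Lemma \ref{growth-est} in the backward variable, after which the now-bounded factor $\tau^{-1/4}\le Ct^{-1/4}$ is pulled out. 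So the growth lemma belongs on the late interval, not the early one, and the restriction $q>6/5$ --- which you located in the early interval --- comes from this late-interval summation.
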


\begin{proof}
We fix $\psi\in L^q_\sigma(D)$ as well as $t>0$ and set
$w(t-\tau)=T(t,\tau)^*\psi$ by following the notation \eqref{adj-2}.
We split the integral into
\begin{equation*}
\begin{split}
&\quad \int_0^t \tau^{-1/4}\|\nabla T(t,\tau)^*\psi\|_2\,d\tau \\
&=\left(\int_0^{t/2}+\int_{t/2}^t\right)(t-\tau)^{-1/4}\|\nabla w(\tau)\|_2\,d\tau
=:I_1+I_2.
\end{split}
\end{equation*}
By \eqref{grad-decay} we have
\begin{equation}
\int_{t/2}^t\|\nabla w(\tau)\|_2^2\,d\tau
=\int_0^{t/2}\|\nabla T(t,\tau)^*\psi\|_2^2\,d\tau
\leq Ct^{-3/q+3/2}\|\psi\|_q^2
\label{key-1}
\end{equation}
for $1<q\leq 2$.
As in the proof of \eqref{J-second}, we apply Lemma \ref{growth-est}
to obtain the growth estimate
\begin{equation}
\int_0^{t/2}\|\nabla w(\tau)\|_2\,d\tau
\leq Ct^{5/4-3/2q}\|\psi\|_q
\label{key-2}
\end{equation}
as long as $6/5<q\leq 2$ so that $3/q-3/2<1$.
It thus follows from \eqref{key-1} and \eqref{key-2} that
\[
I_1\leq Ct^{-1/4}\int_0^{t/2}\|\nabla w(\tau)\|_2\,d\tau
\leq Ct^{-3/2q+1}\|\psi\|_q
\]
and that
\[
I_2\leq Ct^{1/4}\left(\int_{t/2}^t\|\nabla w(\tau)\|_2^2\,d\tau\right)^{1/2}
\leq Ct^{-3/2q+1}\|\psi\|_q.
\]
The proof is complete.
\end{proof}

Given $u\in E_4$ and $t>0$, let us define $(Hu)(t)$ by
\[
\langle (Hu)(t), \psi\rangle_D=
\mbox{the RHS of \eqref{w-INS}}, \qquad
\forall\,\psi\in C^\infty_{0,\sigma}(D),
\]
and regard \eqref{w-INS} as the equation $u=Hu$.
By \eqref{duha-NS} with \eqref{key-0} and by \eqref{top-NS}
we find from \eqref{short} that
\begin{equation}
\begin{split}
t^{1/2-3/2r}\|(Hu)(t)\|_r
&\leq t^{1/2-3/2r}\|T(t,0)u_0\|_r+k_r\|u\|_{E_4(t)}^2 \to 0 \quad (t\to 0), \\
\|(Hu)(t)-u_0\|_3
&\leq \|T(t,0)u_0-u_0\|_3+k_3\|u\|_{E_4(t)}^2 \to 0 \quad (t\to 0),
\end{split}
\label{near-zero}
\end{equation}
for $r\in (3,6)$ and that
\begin{equation}
\sup_{t>0}\,t^{1/2-3/2r}\|(Hu)(t)\|_r \leq c_r\|u_0\|_3+k_r\|u\|_{E_4}^2,
\label{est-NS}
\end{equation}
for $r\in [3,6)$ with some constant $k_r=k_r(m,\theta,D)>0$.

Let us show the weak-continuity of $Hu$ with respect to $t\in (0,\infty)$
with values in $L^4_\sigma(D)$.
We fix ${\cal T}\in (0,\infty)$ arbitrarily.
Let $t\in (0,{\cal T})$ and $t+h\in (t/2,{\cal T})$.
On account of $T(t/2,0)u_0\in L^4_\sigma(D)$,
it is obvious that
\[
\|T(t+h,0)u_0-T(t,0)u_0\|_4
=\|\{T(t+h,t/2)-T(t,t/2)\}T(t/2,0)u_0\|_4\to 0
\]
as $h\to 0$.
When $t<t+h<{\cal T}$, the second part of
$\langle(Hu)(t+h)-(Hu)(t), \psi\rangle$
is splitted into
\begin{equation}
\begin{split}
I+J:=
&\int_0^t\langle (u\otimes u)(\tau),
\nabla\{T(t+h,\tau)^*-T(t,\tau)^*\}\psi\rangle\,d\tau  \\
&+\int_t^{t+h}\langle (u\otimes u)(\tau),
\nabla T(t+h,\tau)^*\psi\rangle\,d\tau.
\end{split}
\label{diff-2nd}
\end{equation}
Since we know the $L^q$-$L^r$ estimate of $\nabla T(t,\tau)^*$
for $0\leq\tau <t<{\cal T}$,
we find that
\[
|I|\leq C\|u\|_{E_4}^2\, t^{-1/8}\|T(t+h,t)^*\psi-\psi\|_{4/3}
\]
and that
\[
|J|\leq C\|u\|_{E_4}^2\, t^{-1/4}h^{1/8}\|\psi\|_{4/3}
\]
with some constant $C=C({\cal T})>0$.
The other case $t/2<t+h<t<{\cal T}$, 
in which \eqref{diff-2nd} should be replaced by
\begin{equation*}
\begin{split}
I+J:=
&\int_0^{t+h}\langle (u\otimes u)(\tau),
\nabla\{T(t+h,\tau)^*-T(t,\tau)^*\}\psi\rangle\,d\tau   \\
&-\int_{t+h}^t\langle (u\otimes u)(\tau),
\nabla T(t,\tau)^*\psi\rangle\,d\tau,
\end{split}
\end{equation*}
is discussed similarly to obtain
\[
|I|\leq C\|u\|_{E_4}^2 (t/2)^{-1/8}\|T(t,t+h)^*\psi-\psi\|_{4/3}
\]
and
\[
|J|\leq C\|u\|_{E_4}^2 (t/2)^{-1/4}(-h)^{1/8}\|\psi\|_{4/3}.
\]
We are thus led to
$Hu\in C_w((0,\infty); L^4_\sigma(D))$, namely,
it is weakly continuous with values in $L^4_\sigma(D)$
(although the estimates above for $I$ tell us that the strong-continuity
would not be clear, where the difficulty stems from the fact that the
corresponding autonomous operator is not a generator of analytic
semigroups unless $\omega =0$).

As a consequence, we obtain $Hu\in E_4$ with
\[
\|Hu\|_{E_4}\leq c_4\|u_0\|_3+k_4\|u\|_{E_4}^2.
\]
Moreover, for $u,\, v\in E_4$, we have
\[
\|Hu-Hv\|_{E_4}\leq k_4\left(\|u\|_{E_4}+\|v\|_{E_4}\right)\|u-v\|_{E_4}.
\]
In this way, we get a unique solution $u\in E_4$ to \eqref{w-INS}
with
\[
\|u\|_{E_4}\leq\frac{1-\sqrt{1-4c_4k_4\|u_0\|_3}}{2k_4}
<2c_4\|u_0\|_3
\]
provided that
$\|u_0\|_3<1/(4c_4k_4)$.
The initial condition
$\lim_{t\to 0}\|u(t)-u_0\|_3=0$ follows from \eqref{near-zero}.
Besides the $L^4$ decay, we obtain the $L^r$ decay of the solution
with rate $t^{-1/2+3/2r}$ on account of \eqref{est-NS} as long as $3<r<6$.
\bigskip

\noindent
{\em 5.2. A global existence theorem}

Finally, we will provide a global existence theorem for the initial
value problem \eqref{NS}--\eqref{IC}.
Let $\phi$ be the same cut-off function as taken at the beginning of
Section \ref{proof} and set
\[
b(x,t)=\frac{1}{2}\,
\mbox{rot $\left\{\phi(x)\big(\eta(t)\times x-|x|^2\omega(t)\big)\right\}$},
\]
which fulfills
\[
\mbox{div $b$}=0, \qquad
b|_{\partial D}=\eta+\omega\times x, \qquad
b(t)\in C_0^\infty(B_{3R_0}),
\]
where $R_0$ is fixed as in \eqref{cov}.
Let us look for a solution of the form
\begin{equation}
u(x,t)=b(x,t)+v(x,t)
\label{subt}
\end{equation}
to \eqref{NS}--\eqref{IC}.
Then, instead of \eqref{w-INS}, $v(t)$ should obey
\begin{equation}
\begin{split}
\langle v(t), \psi\rangle_D
&=\langle T(t,0)v_0, \psi\rangle_D
+\int_0^t\langle T(t,\tau)F(\tau), \psi\rangle_D\, d\tau  \\
&\quad +\int_0^t\langle (v\otimes v+v\otimes b+b\otimes v)(\tau),
\nabla T(t,\tau)^*\psi\rangle_D\,d\tau,  \\
&\qquad\qquad\qquad\qquad\qquad
\forall\,\psi\in C^\infty_{0,\sigma}(D),
\end{split}
\label{w-INS2}
\end{equation}
where
\[
v_0:=u_0-b(\cdot,0)\in L^3_\sigma(D),
\]
see \eqref{i-data} below, and
\[
F:=\Delta b+(\eta+\omega\times x)\cdot\nabla b-\omega\times b
-\partial_tb-b\cdot\nabla b.
\]
\begin{theorem}
Suppose that there is a constant $\gamma\in [1/8,1)$ satisfying
\begin{equation}
\begin{split}
&\eta,\,\omega\in C^1([0,\infty); \mathbb R^3),  \\
&M:=\sup_{t\geq 0}\; (1+t)^\gamma
\big(|\eta(t)|+|\eta^\prime(t)|+|\omega(t)|+|\omega^\prime(t)|\big)<\infty,
\label{body-1}
\end{split}
\end{equation}
and that
\begin{equation}
u_0\in L^3(D), \quad
\mbox{div $u_0$}=0, \quad
\nu\cdot (u_0-\eta(0)-\omega(0)\times x)|_{\partial D}=0.
\label{i-data}
\end{equation}
Then there is a constant $\delta=\delta(D)>0$ such that if
\[
\|u_0\|_3+M\leq\delta,
\]
problem \eqref{w-INS2} admits a unique global solution $v\in E_4$ which enjoys
\begin{equation}
\|v(t)\|_r=O(t^{-\mu}) \quad
\mbox{as $t\to\infty$}
\label{v-decay}
\end{equation}
for every $r\in [3,6)$ with
$\mu:=\min\{1/2-3/2r,\, \gamma\}$,
where $E_4$ is given by \eqref{where}.
\label{main-NS}
\end{theorem}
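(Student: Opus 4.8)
The plan is to solve the fixed-point equation $v=\Phi v$, where $\langle \Phi v(t),\psi\rangle_D$ denotes the right-hand side of \eqref{w-INS2}, in the space $E_4$ of \eqref{where}, by a contraction argument parallel to the one carried out above for the homogeneous problem, but now also carrying the linear forcing $F$ and the two cross terms $v\otimes b$ and $b\otimes v$. First I would record the elementary bounds on the carrier field $b$. Since $b(t)\in C_0^\infty(B_{3R_0})$ is built linearly from $(\eta,\omega)$ and $\partial_t b$ from $(\eta',\omega')$, hypothesis \eqref{body-1} gives $\|b(t)\|_{W^{1,\infty}(D)}\le CM(1+t)^{-\gamma}$; because $F$ is at most quadratic in $(\eta,\omega)$ with its only quadratic piece $b\cdot\nabla b$ decaying like $(1+t)^{-2\gamma}$, this yields
\[
\|F(t)\|_q\le CM(1+t)^{-\gamma},\qquad 1<q<\infty,
\]
for $M\le\delta$ small. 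I would also check that $v_0=u_0-b(\cdot,0)\in L^3_\sigma(D)$: the compatibility condition in \eqref{i-data}, together with $b(\cdot,0)|_{\partial D}=\eta(0)+\omega(0)\times x$ and $\mbox{div}\,b=0$, kills the normal trace, and $\|v_0\|_3\le\|u_0\|_3+CM$.

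Next I would dispatch the two easier contributions. The linear part $T(t,0)v_0$ lies in $E_4$ with norm $\le c_4\|v_0\|_3$ by \eqref{top-NS}, and it decays in $L^r$ like $t^{-(1/2-3/2r)}$. For the forcing term $\int_0^t T(t,\tau)F(\tau)\,d\tau$ I would use the decay estimate \eqref{LqLr} of Theorem \ref{main} with a source exponent $q$ close to $1$ (so the smoothing exponent $(3/q-3/r)/2$ is near its maximal useful value) and split the integral over $(0,1)$, $(1,t/2)$ and $(t/2,t)$; combining the spatial decay of $T(t,\tau)$ on the first two pieces with the temporal decay $(1+\tau)^{-\gamma}$ of $F$ on the last two shows this term lies in $E_4$ with norm $\le CM$ and decays in $L^r$ like $t^{-\min\{1/2-3/2r,\,\gamma\}}$. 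The genuinely nonlinear term $\int_0^t\langle v\otimes v,\nabla T(t,\tau)^*\psi\rangle_D\,d\tau$ is treated verbatim as in \eqref{duha-NS}–Lemma \ref{key}, giving $L^r$ decay $t^{-(1/2-3/2r)}$ and a quadratic bound $k_r\|v\|_{E_4}^2$.

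The crux, and the step I expect to be the main obstacle, is the pair of cross terms: they are \emph{linear} in $v$, must be absorbed into the contraction constant, and — since no pointwise decay of $\nabla T(t,s)^*$ is available — must be routed entirely through the square-integral bound \eqref{grad-decay}. After Hölder on the fixed support of $b$ and the Poincar\'e inequality on $D_{3R_0}$, one gets
\[
|\langle (v\otimes b+b\otimes v)(\tau),\nabla T(t,\tau)^*\psi\rangle_D|
\le CM(1+\tau)^{-\gamma}\,\tau^{-1/8}\|v\|_{E_4}\,\|\nabla T(t,\tau)^*\psi\|_2 .
\]
The naive bound $(1+\tau)^{-\gamma}\le 1$ is lossy and leaves the $L^4$ norm merely bounded; instead I would integrate using the three-way split $(0,1)\cup(1,t/2)\cup(t/2,t)$, applying Cauchy–Schwarz against \eqref{grad-decay} on the first two pieces (keeping the factor $(1+\tau)^{-\gamma}$ alive on $(1,t/2)$) and the growth estimate of Lemma \ref{growth-est} on $(t/2,t)$ exactly as in the proof of Lemma \ref{key}. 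A short computation then gives an $L^r$ bound decaying like $t^{-\min\{1/2-3/2r,\,\gamma\}}$, in particular at rate $\ge t^{-1/8}$ in $L^4$ precisely because $\gamma\ge 1/8$; this is where that hypothesis is consumed, and it is exactly what keeps the cross terms inside $E_4$.

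Collecting these, $\Phi$ maps a small ball of $E_4$ into itself with $\|\Phi v\|_{E_4}\le c_4\|v_0\|_3+CM+(CM+k_4\|v\|_{E_4})\|v\|_{E_4}$ and is a contraction once $\|u_0\|_3+M\le\delta$; the corresponding Lipschitz estimate uses the bilinearity of $v\otimes v$ and the linearity of the cross terms in the same way, the factor $CM$ being absorbed by smallness. Weak continuity of $\Phi v$ in $t$ with values in $L^4_\sigma(D)$ is verified as in the homogeneous case above, now with the extra harmless contributions of $F$ and of $b$. The unique fixed point is the desired global $v\in E_4$, and the decay \eqref{v-decay} follows by reading off the $L^r$-decay rates of the four contributions — $t^{-(1/2-3/2r)}$ for the linear and nonlinear parts, and $t^{-\min\{1/2-3/2r,\,\gamma\}}$ for the forcing and cross terms — each of which is at most $t^{-\mu}$ with $\mu=\min\{1/2-3/2r,\gamma\}$, so that no bootstrap in the target rate is needed.
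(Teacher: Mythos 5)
Your proposal follows the paper's own route: the same fixed-point formulation for the map $v\mapsto{\cal H}v$ defined by the right-hand side of \eqref{w-INS2} in the space $E_4$, with the quadratic term handled exactly by \eqref{duha-NS} and Lemma \ref{key}, and the cross terms routed through \eqref{grad-decay} and Lemma \ref{growth-est}. On the cross terms your bookkeeping differs slightly from the paper's, but both work: the paper simply observes that \eqref{bF} together with $\gamma\geq 1/8$ gives $\sup_\tau \tau^{1/8}\|b(\tau)\|_4\leq CM$, i.e. $b\in E_4$ with $\|b\|_{E_4}\leq CM$, and then reuses the bilinear estimate \eqref{duha-NS}--Lemma \ref{key} verbatim with $(v,b)$ and $(b,v)$; your variant, which keeps the factor $(1+\tau)^{-\gamma}$ alive and redoes the split/Cauchy--Schwarz/growth argument, yields a marginally better rate but is not needed. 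One small correction: $\gamma\geq 1/8$ is \emph{not} consumed only at the cross terms -- it is equally needed for the forcing term, since $\int_0^t T(t,\tau)F(\tau)\,d\tau$ decays in $L^4$ only like $t^{-\gamma}$ (see \eqref{gomi}), and membership in $E_4$ requires precisely the rate $t^{-1/8}$.

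There is one step that fails as literally written: your treatment of the forcing term. If you use \eqref{LqLr} with a source exponent $q$ close to $1$, so that $(3/q-3/r)/2>1$, then on your last piece $(t/2,t)$ the kernel factor $(t-\tau)^{-(3/q-3/r)/2}$ is not integrable at $\tau=t$ and the estimate diverges; if instead you lower the exponent there so that $(3/q-3/r)/2<1$, the integral $\int_{t/2}^t(t-\tau)^{-(3/q-3/r)/2}\,d\tau$ produces a growing factor $\sim t^{1-(3/q-3/r)/2}$, and the piece only decays like $t^{-(\gamma-1+(3/q-3/r)/2)}$, strictly slower than $t^{-\gamma}$. This loss is fatal exactly in the regime $\gamma\leq 1/2-3/2r$ (in particular at the admissible endpoint $\gamma=1/8$ with $r=4$, where $E_4$-membership of the forcing term is at stake, and for the decay claim \eqref{v-decay} when $\mu=\gamma$). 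The cure is the paper's splitting $\int_0^{t/2}+\int_{t/2}^{t-1}+\int_{t-1}^t$: on the middle piece the singularity at $\tau=t$ is at distance $\geq 1$, so the exponent $(3/q-3/r)/2>1$ (with $q$ close to $1$, possible for $r>3$) makes that integral uniformly bounded, while on the unit interval $(t-1,t)$ one takes $q=r$ and uses only $\|F(\tau)\|_r\leq CM(1+\tau)^{-\gamma}$; this yields the clean bound $\alpha_r(t)=CMt(1+t)^{-1-\gamma}$ of \eqref{gomi}, i.e. the full rate $t^{-\gamma}$. With this replacement your argument goes through and coincides with the paper's proof.
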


\begin{proof}
As in the previous subsection the only point is to employ Lemma \ref{key}
and so the proof may be almost obvious for most of readers,
nevertheless it will be presented in order to clarify
why $\gamma\geq 1/8$.
We may assume $M\leq 1$ at the beginning, then we have \eqref{mag} with $m=3$
(and $\theta=1$); in what follows, we use Theorems \ref{main} and \ref{int-decay}
for such $m$.
By \eqref{body-1} we obtain
\begin{equation}
\|b(t)\|_q+\|F(t)\|_q\leq CM(1+t)^{-\gamma}
\label{bF}
\end{equation}
for all $t\geq 0$ and $q\in (1,\infty]$ with some constant $C=C(q)>0$,
which implies that
\begin{equation}
\int_0^t\|T(t,\tau)F(\tau)\|_r\,d\tau
\leq \alpha_r(t):=
\left\{
\begin{array}{ll}
CMt(1+t)^{-1-\gamma}, & r\in (3,\infty), \\
C_\varepsilon Mt(1+t)^{-1-\gamma+\varepsilon}, \qquad & r=3,
\end{array}
\right.
\label{gomi}
\end{equation}
for all $t>0$
with some constants $C=C(r)>0$ and $C_\varepsilon >0$,
where $\varepsilon >0$ is arbitrary.
This decay estimate for $t>2$ is easily 
verified by splitting the integral into three parts
\[
\int_0^t=\int_0^{t/2}+\int_{t/2}^{t-1}+\int_{t-1}^t
\]
and by using \eqref{bF} with $q\in (1,r)$ satisfying
$(3/q-3/r)/2>1$ (which is possible as long as $r>3$)
except for the last integral over $(t-1,t)$.
Given $v\in E_4$ and $t>0$, this time, we define $({\cal H}v)(t)$ by
\[
\langle ({\cal H}v)(t), \psi\rangle_D
=\mbox{the RHS of \eqref{w-INS2}}, \qquad
\forall\,\psi\in C^\infty_{0,\sigma}(D).
\]
Then we have $({\cal H}v)(t)\in L^r_\sigma(D)$ for all $r\in [3,6)$;
further, \eqref{near-zero} and \eqref{est-NS} are respectively replaced by
\begin{equation*}
\begin{split}
t^{1/2-3/2r}\|({\cal H}v)(t)\|_r
&\leq t^{1/2-3/2r}\|T(t,0)v_0\|_r +\beta_r(t)
\to 0 \quad (t\to 0),  \\
\|({\cal H}v)(t)-v_0\|_3
&\leq \|T(t,0)v_0-v_0\|_3 +\beta_3(t)
\to 0 \quad (t\to 0),
\end{split}
\end{equation*}
(the former holds for $r\in (3,6)$) with
\[
\beta_r(t):=
t^{1/2-3/2r}\alpha_r(t)
+k_r\left(CM+\|v\|_{E_4(t)}\right)\|v\|_{E_4(t)}
\]
and by
\begin{equation*}
\begin{split}
\|({\cal H}v)(t)\|_r
&\leq c_r(\|u_0\|_3+CM)\,t^{-1/2+3/2r}+\alpha_r(t) \\
&\quad +k_r\left(CM+\|v\|_{E_4}\right)\|v\|_{E_4}\,t^{-1/2+3/2r}
\end{split}
\end{equation*}
for all $t>0$, where $\alpha_r(t)$ is given by \eqref{gomi}.
One can also verify
${\cal H}v\in C_w((0,\infty); L^4_\sigma(D))$
along the same way as in the previous subsection, so that
${\cal H}v\in E_4$.
Consequently, we see that
\begin{equation*}
\begin{split}
\|{\cal H}v\|_{E_4}
&\leq C(\|u_0\|_3+M)+k_4\left(CM+\|v\|_{E_4}\right)\|v\|_{E_4}, \\
\|{\cal H}v-{\cal H}w\|_{E_4}
&\leq k_4\left(CM+\|v\|_{E_4}+\|w\|_{E_4}\right)\|v-w\|_{E_4},
\end{split}
\end{equation*}
for $v,\, w\in E_4$, which completes the proof.
\end{proof}

In view of \eqref{subt}, \eqref{v-decay} and \eqref{bF}, we conclude
\[
\|u(t)\|_r=O(t^{-\mu}) \quad\mbox{as $t\to\infty$}
\]
for every $r\in [3,6)$ with the same $\mu$ as in \eqref{v-decay}.
\bigskip

\noindent
{\em Acknowledgments}.
The author is partially supported by Grant-in-Aid for scientific research
15K04954 from the Japan Society for the Promotion of Science.


\end{document}